\theoremstyle{plain}
\newtheorem{theorem}{Theorem}
\newtheorem{corollary}[theorem]{Corollary}
\newtheorem{lemma}[theorem]{Lemma}
\newtheorem{proposition}[theorem]{Proposition}
\numberwithin{equation}{section} 
\numberwithin{theorem}{section} 
\newtheorem{example}{Example}
\begin{document}
	
	\title[CMC Radial graphs over domains of $\mathbb S^n$]
	{Constant mean curvature Radial \\ graphs over domains of $\mathbb{S}^n$ }
	\author{Flávio Cruz, José T. Cruz and  Jocel Oliveira}
	\address{Departamento de Matem\'atica\\ 
		Universidade Regional do Cariri\\ 
		Campus Crajubar \\ 
		Juazeiro do Norte, Cear\'a -  Brazil\\ 63041-145}
	\email{flavio.franca@urca.br}
		\address{Departamento de Matem\'atica\\ 
		Universidade Regional do Cariri\\ 
		Campus Crajubar \\ 
		Juazeiro do Norte, Cear\'a -  Brazil\\ 63041-145}
	\email{tiago.cruz@urca.br}
		\address{Departamento de Matem\'atica\\ 
		Universidade Regional do Cariri\\ 
		Campus Crajubar \\ 
		Juazeiro do Norte, Cear\'a -  Brazil\\ 63041-145}
	\email{jocel.faustino@urca.br}
	
	\keywords{Constant Mean Curvature;  Radial graphs; Star-shaped surfaces}
	\subjclass[2000]{53C42, 35J60}
	\thanks{Research partially supported by FUNCAP Grant No. BP6-00241-00347.01.00/25}
	
	\begin{abstract} 
	We establish the existence of hypersurfaces with constant mean curvature and a prescribed boundary in Euclidean space, represented as radial graphs over domains of the unit sphere. Under the assumptions that the mean curvature of the domain's boundary is positive and that a subsolution exists for the associated Dirichlet problem, we extend Serrin's classical result to include the case of positive constant mean curvature.

	\end{abstract}
	
	\maketitle
	
	\section{Introduction}
	
	The existence of hypersurfaces with constant mean curvature (in short, CMC hypersurfaces) and a prescribed boundary is a research topic that has attracted the attention of many mathematicians in recent decades.  A useful strategy to obtain a CMC hypersurfaces with a prescribed boundary in Euclidean space is to describe them nonparametrically as solutions of a Dirichlet problem for a certain quasilinear elliptic PDE. Hence, the solutions are then graphs over domains in umbilical hypersurfaces of Euclidean space. The classic references on the subject are \cite{TRU}, \cite{JS}, and \cite{SER}. Here, we are interested in the case that the umbilical hypersurface is a sphere.
	
	We now explain more precisely the framework we are considering. Let $\Omega\subset \mathbb{S}^n$ be a smooth domain (that is, open and connected) of the unit sphere $\mathbb{S}^n\subset \mathbb{R}^{n+1}$ in the $(n+1)$-dimensional Euclidean space. The \textit{radial graph} over $\Omega$ of a function $u\in C^2(\Omega)\cap C^0(\overline{\Omega})$ is the hypersurface
	\[
	\Sigma=\Sigma(u)=\{X(p)=e^{u(p)}p:\;p\in\overline{\Omega}\}\subset\mathbb{R}^{n+1},
	\]
	where $X$ denotes the position vector of $\Sigma$ in $\mathbb{R}^{n+1}$. A radial graph $\Sigma$ is characterized as a star-shaped surface with respect to the origin, that is, for each half-line starting from the origin, it intersects $\Sigma$ at most at one point. In particular, if $N$ is the unit normal vector of $\Sigma$, then the support function $\langle N(X),X\rangle$ is either positive or negative everywhere.
	In this work, we assume that the orientation of $\Sigma$ is the natural one, that is, $\langle N(X),X\rangle<0$, for all $X\in \Sigma$.
	
	In this work, we consider the problem of finding radial graphs with constant mean curvature and a prescribed boundary. After notable contributions by Radó \cite{Rad} and Tausch \cite{TAU}, focused on minimal surfaces, Serrin \cite{SER} established the existence of radial graphs with constant \textit{non-positive} mean curvature under a suitable smallness condition involving the boundary datum and the geodesic mean curvature of the boundary  $\partial\Omega$ with respect to $\mathbb S^n$. Precisely, Serrin proved the following theorem.
	\begin{theorem}\label{serrin}(Serrin, \cite{SER}) Let $\Omega$ be a smooth domain of $S^n$ whose closure is contained in an open hemisphere. Denote by $\mathcal{H}_{\partial\Omega}$ the mean curvature of $\partial \Omega$ as a submanifold of $\Omega$,  computed with respect to the unit normal pointing to the interior of $\Omega$. Let $\varphi$ be a smooth function on $\partial \Omega$ and $H$ be a non-positive smooth function defined on $\overline{\Omega}$ and such that
		\begin{equation}
			\mathcal{H}_{\partial\Omega}(p)\geq -\frac{n}{n-1}H(p)e^{\varphi(p)}\geq 0
		\end{equation}
		for each $p\in \partial \Omega$. Then there exists a unique radial graph $\Sigma$ on $\Omega$ with mean curvature $H$ and boundary data $\varphi$.
	\end{theorem}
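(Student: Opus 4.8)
\emph{Plan.} The idea is to translate the geometric problem into a Dirichlet problem for a quasilinear elliptic equation on $\Omega$ equipped with the round metric, and to solve it by the method of continuity; the real content is in the a priori estimates, and the geometric hypothesis is precisely what makes them work. Parametrizing $\Sigma(u)$ by $X(p)=e^{u(p)}p$ and computing the induced metric, the unit normal $N=(\nabla u-p)/W$ with $W=\sqrt{1+|\nabla u|^{2}}$ (so that $\langle N,X\rangle=-e^{u}/W<0$, the required orientation), and the second fundamental form, one finds that $\Sigma(u)$ has mean curvature $H$ if and only if $u$ solves
\[
\mathcal{Q}[u]\ :=\ \operatorname{div}_{\mathbb{S}^{n}}\!\left(\frac{\nabla u}{W}\right)-\frac{n}{W}+nHe^{u}\ =\ 0 \quad\text{in }\Omega,\qquad u=\varphi \quad\text{on }\partial\Omega,
\]
where $\nabla$ and $\operatorname{div}_{\mathbb{S}^{n}}$ are taken with respect to the round metric (the exact form of the zero-order term depending on one's sign conventions for $H$ and $N$). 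This is a quasilinear elliptic operator of mean-curvature type. I would first dispose of uniqueness: if $\Sigma(u_{1})\ne\Sigma(u_{2})$ have the same $H\le 0$ and boundary datum, a homothety centred at the origin carries one surface to a first interior point of contact with the other from one side; since dilating by a factor $\lambda>1$ multiplies the nonpositive mean curvature by $\lambda^{-1}$, hence does not decrease it, the difference of the two radial functions is a nonnegative supersolution of a linear elliptic equation vanishing at that interior point, so the strong maximum principle forces the surfaces to coincide, a contradiction. Existence is the substance.

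\emph{Continuity method.} I would embed the problem in the family $\mathcal{Q}_{t}[u]:=\operatorname{div}_{\mathbb{S}^{n}}(\nabla u/W)-n/W+ntHe^{u}=0$ in $\Omega$, $u=\varphi$ on $\partial\Omega$, for $t\in[0,1]$. At $t=0$ this is the equation of a \emph{minimal} radial graph over $\Omega$ with boundary datum $\varphi$, which is solvable because $\overline{\Omega}$ lies in an open hemisphere, by the work of Rad\'o \cite{Rad} and Tausch \cite{TAU}. The hypothesis survives along the family: for $0\le t\le 1$ and $H\le 0$ one has $\mathcal{H}_{\partial\Omega}\ge-\tfrac{n}{n-1}He^{\varphi}\ge-\tfrac{n}{n-1}(tH)e^{\varphi}\ge 0$. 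Hence the set of $t\in[0,1]$ for which a solution exists in $C^{2,\alpha}(\overline{\Omega})$ is nonempty; it is open by the implicit function theorem (the linearized operator being invertible, thanks to the structural sign conditions on its lower-order coefficients); and it is closed provided one has a priori bounds in $C^{2,\alpha}(\overline{\Omega})$, uniform in $t$, for every solution of the family. Everything reduces to these estimates.

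\emph{A priori estimates.} These proceed in the classical order. (i) $C^{0}$: comparison with spherical barriers --- the constants $u\equiv c$ represent round spheres centred at the origin --- gives one side of the bound, and the other side comes from barriers built near $\partial\Omega$ out of the distance function and the mean-convexity of $\partial\Omega$. (ii) \emph{Boundary gradient estimate}, the heart of the proof. On a collar $\Omega_{\delta}=\{p:0<d(p)<\delta\}$, with $d=\operatorname{dist}_{\mathbb{S}^{n}}(\cdot,\partial\Omega)$, I would use barriers $w^{\pm}=\varphi^{\ast}\pm\psi(d)$, where $\varphi^{\ast}$ is a fixed extension of $\varphi$ and $\psi$ is smooth, increasing, strictly concave, with $\psi(0)=0$ and $\psi'(0)$ large; one checks that $w^{+}$ is a supersolution and $w^{-}$ a subsolution of $\mathcal{Q}$ in $\Omega_{\delta}$ once $\psi'(0)$ is large enough, the only obstruction as $\psi$ is made steep being the inequality $(n-1)\mathcal{H}_{\partial\Omega}+nHe^{\varphi}\ge 0$ on $\partial\Omega$ --- exactly the hypothesis. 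Since $w^{\pm}$ agree with $u$ on $\partial\Omega$ and, by (i), trap $u$ on $\{d=\delta\}$, this gives $|\nabla u|\le C$ on $\partial\Omega$. (iii) \emph{Global gradient estimate}: $\sup_{\Omega}|\nabla u|$ is then controlled either by the interior gradient estimate for mean-curvature-type equations, or by the maximum principle applied to a suitable function of $u$ and $W$ --- equivalently, of the support function $\langle N,X\rangle$ --- combined with the Gauss and Codazzi equations of $\Sigma$, which reduces the interior bound to the boundary one; here the hypothesis that $\overline{\Omega}$ lies in an open hemisphere enters, to control the curvature contributions of $\mathbb{S}^{n}$. (iv) Once $\|u\|_{C^{1}(\overline{\Omega})}\le C$, the equation is uniformly elliptic with H\"older-continuous coefficients, so De Giorgi--Nash--Moser theory yields a $C^{1,\alpha}$ bound and then Schauder theory a $C^{2,\alpha}$ bound, uniform in $t$. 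The continuity method now produces a solution at $t=1$, and elliptic regularity makes it smooth up to $\partial\Omega$.

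\emph{Where the difficulty lies.} The decisive step is the boundary gradient estimate, together with the side of the $C^{0}$ bound tied to it: this is where the sharp relation $\mathcal{H}_{\partial\Omega}(p)\ge-\tfrac{n}{n-1}H(p)e^{\varphi(p)}\ge 0$ is genuinely consumed, and arranging the concave profile $\psi$, the collar width $\delta$, and the borderline case of equality to cooperate is the most delicate point of the argument. A secondary difficulty is the global gradient estimate, which requires careful bookkeeping with the ambient curvature of $\mathbb{S}^{n}$ and uses the hemisphere hypothesis.
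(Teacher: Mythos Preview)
The paper does not prove this theorem. Theorem~\ref{serrin} is stated in the introduction as Serrin's classical result from \cite{SER}, serving as background and motivation for the paper's own contribution, Theorem~\ref{main}, which treats the complementary case $H>0$. There is therefore no ``paper's own proof'' to compare your proposal against.

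That said, your outline is a faithful sketch of the standard approach to Serrin's theorem, and it is worth noting how it relates to the machinery the paper develops for its own main result. The barrier constructions you describe for the height and boundary gradient estimates are close in spirit to those in Sections~\ref{Estimates} and~\ref{interior}, where the paper uses the distance function $d=\mathrm{dist}(\cdot,\partial\Omega)$ and profiles $f(d)=-\mu\ln(1+Kd)$ together with Lemma~\ref{lemma-d} on the monotonicity of the mean curvature of the parallel hypersurfaces~$\Gamma_\epsilon$. The key difference is structural: for $H\le 0$ the linearization of $Q[u]+nH$ has nonpositive zero-order coefficient (since $\partial Q/\partial u=-Q[u]=nH\le 0$), so the comparison principle is available and the plain continuity method you propose suffices; for $H>0$ this fails, which is precisely why the paper must replace your homotopy by the two auxiliary families \eqref{pd3}--\eqref{pd4} and invoke degree theory. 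Two minor points in your sketch deserve tightening: the $C^{0}$ upper bound follows directly from the maximum principle (at an interior maximum $Q[u]\le -ne^{-u}<0\le -nH$, a contradiction), so spherical barriers are not needed there; and your openness step via the implicit function theorem requires the zero-order coefficient to be nonpositive, which holds for $tH\le 0$ but should be stated rather than alluded to as ``structural sign conditions.''
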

	We emphasize that, in Serrin’s theorem, the mean curvature of the boundary of the domain is necessarily non-negative $\mathcal{H}_{\partial\Omega} \geq 0$. In particular, for $n=2$, this condition implies that the domain is contained in a hemisphere of $\mathbb S^2$. Therefore, this last condition can be excluded as a hypothesis when we deal with surfaces in the Euclidean three-dimensional space.
	
	Several versions, improvements, and variations of Serrin's Theorem have been developed in different ways, including in recent years (see, e.g., \cite{A-D}, \cite{Cald}, \cite{Rip}, \cite{Lop},  \cite{Nelli}, and references therein). However, to our knowledge, all results focus on cases with non-positive mean curvature $H$, except for \cite{Cald}, which deals with prescribed \textit{non-constant} positive functions $H$ defined in $\mathbb{R}^{n+1}$. Although the authors do not explain how they derive the key boundary gradient estimate in \cite{Cald}. The main difficulty in handling the positive mean curvature case (with the orientation fixed above) lies in the fact that the mean curvature equation does not satisfy the maximum principle in these conditions.
	Here, we deal with the problem of finding radial graphs with \textit{positive} constant mean curvature in domains that are not necessarily contained in a hemisphere of $\mathbb S^n$. Specifically, we prove the following result.
	
	\begin{theorem}\label{main}
		Let $\Omega \subset \mathbb{S}^n$ be a smooth domain and let $\varphi$ be a smooth function on $\partial \Omega$. 
		Assume the mean curvature of the boundary $\partial \Omega$ as a submanifold of $\Omega$, computed with respect to the unit normal pointing to the interior of $\Omega$,
		is positive and that there exists a radial graph $\overline{\Sigma}$ with mean curvature $H_{\overline{\Sigma}}>0$ and boundary data $\varphi$. Then, for any constant $H$ satisfying $0<H<H_{\overline{\Sigma}}$, there exists a radial graph $\Sigma$ with constant mean curvature $H$ and boundary data $\varphi$.
	\end{theorem}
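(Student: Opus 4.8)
The plan is to realize $\Sigma$ as the radial graph of the solution of a Dirichlet problem and run the method of continuity. A computation shows that $\Sigma(u)$ has mean curvature $H$, for the orientation fixed above, exactly when
\[
\mathcal{Q}[u]:=\mathrm{div}_{\mathbb{S}^{n}}\!\left(\frac{\nabla u}{W}\right)-\frac{n}{W}+nHe^{u}=0\quad\text{in }\Omega,\qquad u=\varphi\ \text{on }\partial\Omega,
\]
where $W=\sqrt{1+|\nabla u|^{2}}$ and $\nabla,\mathrm{div}_{\mathbb{S}^{n}}$ refer to the round metric; writing $\mathcal{M}[u]$ for the mean curvature of $\Sigma(u)$ one has $\mathcal{Q}[u]=ne^{u}(H-\mathcal{M}[u])$, so the equation is $\mathcal{M}[u]=H$. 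For $\tau\in[0,1]$ let $\mathcal{Q}_{\tau}$ be defined as above with $H_{\tau}:=(1-\tau)H_{\overline{\Sigma}}+\tau H$ in place of $H$. Since $\overline{\Sigma}=\Sigma(\overline{u})$ has mean curvature $H_{\overline{\Sigma}}$, the member $\tau=0$ is solved by $\overline{u}$, and $\overline{u}$ is a subsolution for every $\tau\in(0,1]$ because $0<H_{\tau}<H_{\overline{\Sigma}}$. I would then prove that the set $I\subset[0,1]$ of parameters admitting a solution $u_{\tau}\in C^{2,\alpha}(\overline{\Omega})$ is nonempty, closed, and open, whence $1\in I$ gives the desired graph.

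Closedness reduces to a priori estimates uniform in $\tau$, and here the key point is to work geometrically rather than with $\mathcal{Q}_{\tau}=0$ directly: the latter does not obey the maximum principle, the zeroth order coefficient $nH_{\tau}e^{u}$ of its linearization being positive, but the tangency principle for prescribed mean curvature hypersurfaces is insensitive to the orientation, since written as a graph over a common tangent plane the mean curvature operator carries no zeroth order term. Thus, if $u_{\tau}-\overline{u}$ had a negative interior minimum then $\overline{\Sigma}$ would touch $\Sigma(u_{\tau})$ from the side of the origin and force $H_{\overline{\Sigma}}\le H_{\tau}$ at the contact point, which is impossible; and if $u_{\tau}-\log R$ had a positive interior maximum, with $R$ chosen so large that $1/R<H$ and $\log R\ge\max\{\max_{\partial\Omega}\varphi,\ \max_{\overline{\Omega}}\overline{u}\}$, then $\Sigma(u_{\tau})$ would touch the round sphere $|X|=R$ from outside and force $H_{\tau}\le 1/R<H\le H_{\tau}$, again impossible. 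Hence $\overline{u}\le u_{\tau}\le\log R$. The boundary gradient estimate I would obtain from barriers of the form $\varphi^{*}\pm\psi(d)$, with $d$ the $\mathbb{S}^{n}$-distance to $\partial\Omega$ and $\psi$ concave with large derivative at $0$: the sign of the leading term of $\mathcal{Q}_{\tau}$ applied to such a barrier is governed by the geodesic mean curvature of $\partial\Omega$, and it is precisely the hypothesis $\mathcal{H}_{\partial\Omega}>0$ — the analogue of Serrin's condition — together with the subsolution $\overline{\Sigma}$ that makes the construction succeed. Feeding the height and boundary gradient bounds into an interior gradient estimate for prescribed mean curvature equations, adapted to graphs over $\mathbb{S}^{n}$ in the spirit of Korevaar and of Bombieri--De Giorgi--Miranda, yields $|\nabla u_{\tau}|\le C$; the equation is then uniformly elliptic with smooth structure, so Krylov--Safonov and Schauder estimates give $\|u_{\tau}\|_{C^{2,\alpha}(\overline{\Omega})}\le C$ uniformly in $\tau$, and closedness of $I$ follows by Arzelà--Ascoli and stability of the equation.

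Openness is where the loss of the maximum principle really tells, and I expect it to be the main obstacle. Linearizing $\mathcal{Q}_{\tau}$ at a solution $u_{\tau}$, and using that a variation of the radial function induces normal speed $v\langle X,N\rangle$ on $\Sigma(u_{\tau})$, one computes $D\mathcal{Q}_{\tau}[u_{\tau}]v=e^{u_{\tau}}\mathcal{J}(v\phi)$, where $\phi=-\langle X,N\rangle=e^{u_{\tau}}/W>0$ is the support function and $\mathcal{J}=\Delta_{\Sigma}+|A|^{2}$ is the Jacobi operator of $\Sigma(u_{\tau})$. Because the a priori estimates bound $\phi$ below by a positive constant, the substitution $w=v\phi$ reduces the invertibility of $D\mathcal{Q}_{\tau}[u_{\tau}]$ on $C^{2,\alpha}$ functions vanishing on $\partial\Omega$ to the non-degeneracy of the Dirichlet problem for $\mathcal{J}$ on $\Sigma(u_{\tau})$, and establishing this non-degeneracy along the whole family is the delicate step. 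I would attempt it by exploiting the strict inequality $H_{\tau}<H_{\overline{\Sigma}}$: the geometric strong maximum principle makes the lower barrier strict, so $u_{\tau}-\overline{u}>0$ in $\Omega$, and comparing this with the sign-definite difference $\mathcal{Q}_{\tau}[u_{\tau}]-\mathcal{Q}_{\tau}[\overline{u}]>0$ forces the relevant linear operator to have negative first Dirichlet eigenvalue with $0$ outside its spectrum. Alternatively one can bypass the implicit function theorem and argue by Leray--Schauder degree, the uniform a priori estimates confining all solutions of the family to a fixed ball on whose boundary $\mathcal{Q}_{\tau}$ never vanishes, so the degree is constant along the homotopy and may be evaluated at $\tau=0$. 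Granting openness, $I=[0,1]$ and $u_{1}$ is the radial function of a radial graph with constant mean curvature $H$ and boundary data $\varphi$; as one should expect in the absence of the maximum principle, no uniqueness is asserted.
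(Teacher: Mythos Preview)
Your height estimates do not go through, and this is exactly the obstruction the paper is built around. At a putative interior minimum of $u_{\tau}-\overline{u}$ the two radial graphs $\Sigma(u_{\tau})$ and $\overline{\Sigma}$ do \emph{not} touch in $\mathbb{R}^{n+1}$: the corresponding points lie at different radii $e^{u_{\tau}(x_{0})}<e^{\overline{u}(x_{0})}$, so the Euclidean tangency principle for graphs over a common tangent plane is unavailable. Working instead with the radial-graph equation, the comparison at $x_{0}$ (where $\nabla u_{\tau}=\nabla\overline{u}$ and $\nabla^{2}u_{\tau}\ge\nabla^{2}\overline{u}$) yields only
\[
H_{\tau}\,e^{u_{\tau}(x_{0})}\;\le\;H_{\overline{\Sigma}}\,e^{\overline{u}(x_{0})},
\]
which is perfectly compatible with $u_{\tau}(x_{0})<\overline{u}(x_{0})$ and $H_{\tau}<H_{\overline{\Sigma}}$; no contradiction arises. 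The factor $e^{u}$ in the mean curvature operator is precisely the zero-order term that spoils the maximum principle, and it cannot be removed by a change of orientation. Your sphere barrier for the upper bound fails for the same reason, with the inequality going the wrong way: at an interior maximum $M$ of $u_{\tau}$ the equation gives $e^{M}\ge 1/H_{\tau}$, a lower bound on $\sup u_{\tau}$, not an upper one.

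Because neither side of the two-sided $C^{0}$ bound can be obtained by direct comparison for the family $Q[u]=-nH_{\tau}$, the paper does not run the continuity method on this family at all. It replaces $-nH$ by two auxiliary right-hand sides carrying an explicit factor $e^{-Cu}$ (equations \eqref{pd3} and \eqref{pd4}); for large $C$ this factor restores a negative zero-order coefficient in the linearization, so the comparison principle and invertibility hold along the first family, giving $u\le\overline{u}$ and a unique solution for each $t$ (Lemma~\ref{lemma6}, Theorem~\ref{thm9}). The lower $C^{0}$ bound is then obtained by a distance-function barrier using $\mathcal{H}_{\partial\Omega}>0$, and the passage to the genuine constant-$H$ problem is made by a Leray--Schauder degree argument on the second family, the degree being computed at $t=0$ where the solution is unique. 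Your outline would need an independent mechanism to force $u_{\tau}\le\overline{u}$ (or some other two-sided bound) before the gradient and higher estimates can even be invoked; absent that, the continuity method on $H_{\tau}$ does not close.
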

	
	We observe that the hypothesis concerning the existence of a subsolution was also introduced in the context of the related existence problem for the Gauss curvature, as explored in \cite{G-S} (see also \cite{FF} and references therein). Uniqueness does not hold in Theorem \ref{main}, as we can see by the Example \ref{example} in Section \ref{Preliminaries}. 
	
	Using the domain $\Omega$ as a subsolution, we derive the following corollary from Theorem \ref{main}.
	\begin{corollary}
		Let $\Omega\subset S^n$ be as in Theorem \ref{main}. If $0<H\leq 1$ then there exists a radial graph $\Sigma$ with constant mean curvature $H$ and boundary $\partial \Sigma=\partial \Omega$. 
	\end{corollary}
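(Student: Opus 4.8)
The plan is to apply Theorem~\ref{main} with the closed domain $\overline{\Omega}$ itself playing the role of the subsolution $\overline{\Sigma}$. First I would observe that $\overline{\Omega}$ is the radial graph of the function $u\equiv 0\in C^{\infty}(\overline{\Omega})\cap C^{0}(\overline{\Omega})$, since $\{e^{0}p:p\in\overline{\Omega}\}=\overline{\Omega}\subset\mathbb{S}^n$; in particular its boundary is $\{e^{0}p:p\in\partial\Omega\}=\partial\Omega$, so it realizes the boundary data $\varphi\equiv 0$.

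Next I would compute the mean curvature of this radial graph in the normalization fixed in the paper. With the orientation $\langle N(X),X\rangle<0$, the unit normal along $\mathbb{S}^n$ is $N=-X$, i.e.\ the normal pointing toward the origin, and with respect to this normal every principal curvature of the unit sphere equals $1$; hence $\overline{\Omega}$ is a radial graph with constant mean curvature $H_{\overline{\Sigma}}\equiv 1>0$. Thus $\overline{\Omega}$ meets all the requirements of Theorem~\ref{main} as a subsolution, the only remaining hypothesis $\mathcal{H}_{\partial\Omega}>0$ being precisely the assumption that $\Omega$ is as in Theorem~\ref{main}.

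Then I would split into two cases. If $H=1$, the surface $\Sigma=\overline{\Omega}$ already has constant mean curvature $1$ and boundary $\partial\Omega$, so nothing further is needed. If $0<H<1=H_{\overline{\Sigma}}$, Theorem~\ref{main} provides a radial graph $\Sigma=\Sigma(u)$ with constant mean curvature $H$ and boundary data $\varphi\equiv 0$; since $u|_{\partial\Omega}=0$, its boundary is $\{e^{0}p:p\in\partial\Omega\}=\partial\Omega$, which completes the argument.

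I do not expect a genuine obstacle here: the corollary is a direct specialization of Theorem~\ref{main}, all the analytic work (the a priori estimates and the continuity method) being already contained in its proof. The only points worth being careful about are, first, the endpoint $H=1$, which is not covered by the strict inequality $0<H<H_{\overline{\Sigma}}$ in Theorem~\ref{main} but is settled at once by taking $\Sigma=\overline{\Omega}$; and second, the sign and orientation bookkeeping that makes the mean curvature of $\mathbb{S}^n$ equal to $+1$ (rather than $-1$, or $n$) in the normalization used here, which is what guarantees that $\overline{\Omega}$ is admissible as a positive-mean-curvature subsolution and explains the bound $H\le 1$.
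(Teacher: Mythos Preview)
Your proof is correct and is exactly the approach the paper indicates: the authors simply remark that the corollary follows from Theorem~\ref{main} ``using the domain $\Omega$ as a subsolution,'' i.e.\ taking $\overline{\Sigma}=\overline{\Omega}$ (the radial graph of $u\equiv 0$, with $H_{\overline{\Sigma}}\equiv 1$ and $\varphi\equiv 0$). Your explicit treatment of the endpoint $H=1$ and the orientation check are the right details to spell out, and the mean curvature normalization is confirmed by plugging $u\equiv 0$ into the paper's equation~\eqref{12}.
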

	
	The proof of Theorem \ref{main} employs the continuity method and degree theory, building on a priori estimates. The paper is organized as follows: Section \ref{Preliminaries} introduces notation and the associate Dirichlet problem. Section \ref{Estimates} establishes the a priori height and boundary gradient estimates, while the gradient interior estimate is developed in Section \ref{interior}.  The existence proof is presented in Section \ref{Existence}.
	
\subsection* {Acknowledgements}

The authors would like to thank Leandro Pessoa for his interest in this work and for various helpful conversations.
Part of this work was completed while José T. Cruz was a guest at the Departamento de Matemática of Universidade Federal do Piauí, whose hospitality is gratefully acknowledged. Flávio F. Cruz thanks FUNCAP for financial support through the BPI program of Grant No. BP6-00241-00347.01.00/25.

	\section{Preliminaries}
	\label{Preliminaries}
	
	We first show that proving Theorem \ref{main} depends on finding a solution to the Dirichlet problem for a quasilinear PDE. We start by reviewing the second fundamental form and other key geometric quantities for a smooth radial graph $\Sigma$ defined by $X(x) = e^{u(x)}x$, where $u$ is a smooth function on a domain $\Omega$ in the unit sphere $\mathbb{S}^n \subset \mathbb{R}^{n+1}$.
	
	Let $e_1,\ldots, e_n$ be a smooth local orthonormal frame on $\mathbb S^n$ and let $\nabla$ denote the covariant derivative on $S^n$. The metric of $\Sigma$ in terms of $u$ is given by
	\begin{equation}
		g_{ij}=\langle \nabla_iX,\nabla_jX\rangle=e^{2u}\left(\delta_{ij}+\nabla_iu\nabla_ju\right)
	\end{equation}
	where $\nabla_i=\nabla_{e_{i}}$ and $\langle\cdot,\cdot \rangle$ denotes  the standard inner product in $\mathbb{R}^{n+1}$. The unit normal of $\Sigma$ on $X$ is given by
	\begin{equation}
		N=\frac{1}{\sqrt{1+|\nabla u|^2}}\left(\nabla u-X\right)
	\end{equation}
	where $\nabla u =\mbox{grad } u$, and the second fundamental form of $\Sigma$ is
	\begin{equation}
		h_{ij}=\langle \nabla_{ij} X,N\rangle = \frac{e^u}{\sqrt{1+|\nabla u|^2}}\left(\delta_{ij}+2\nabla_iu\nabla_ju-\nabla_{ij}u\right)
	\end{equation}
	where $\nabla_{ij}=\nabla_i\nabla_j$. The mean curvature $H$ of $\Sigma$ is given by 
	\[
	nH=\sum_{i,j}g^{ij}h_{ij},
	\]
	where $(g^{ij})$ is the inverse of $(g_{ij})$.
	A straightforward computation shows that $H$ satisfies the equation
	\begin{equation}\label{12}
		Q[u]=\frac{1}{e^uW}\left(\sum_{i,j}\left(\delta_{ij}-\frac{\nabla_iu\nabla_ju}{W^2}\right)\nabla_{ij}u-n\right)=-nH
	\end{equation}
	where $W^2=1+|\nabla u|^2$.  Equation \eqref{12} is a quasilinear elliptic equation in $\Omega\subset\mathbb S^n$ and the machinery of Schauder theory can be used in the problem of existence. Thus, Theorem \ref{main} is equivalent to find a solution $u\in C^2(\Omega)\cap C^0(\overline\Omega)$ of the Dirichlet problem
	\begin{equation}\label{pd}
		\left\{\begin{array}{cl}
			Q[u]=-nH & \mbox{ in } \Omega \\
			u=\varphi & \mbox{ on } \partial \Omega.
		\end{array}\right.
	\end{equation}
	The operator $Q$ is strictly positive elliptic, but the zero order term in its linearized form is positive and not easily controlled.
	As a result, when using the continuity method and degree theory to show a solution exists, we find that equation \eqref{pd} is not suitable to work with. Therefore, in order to prove the existence of solutions for \eqref{pd}, we will use two auxiliary forms of \eqref{pd}. Precisely, we will work on the Dirichlet problem
	\begin{equation}\label{pd2}
		\left\{\begin{array}{cl}
			Q[u]=\Upsilon(X) & \mbox{ in } \Omega \\
			u=\varphi & \mbox{ on } \partial \Omega,
		\end{array}\right.
	\end{equation}
	where $\Upsilon$ is a non-positive smooth function defined in 
	\[
	\Delta=\left\{X\in \mathbb{R}^{n+1} : \frac{X}{|X|}\in \Omega\right\}
	\]
	and satisfies
	\begin{equation}
		\label{decres-rho}
		\frac{\partial }{\partial \rho}(\rho \Upsilon (\rho x))\geq 0
	\end{equation}
	for $\rho \leq \overline{\rho}$, where $\overline{\rho}$ is defined by $\overline\Sigma=\{\overline \rho(x) x : x\in \overline \Omega\}$. The two specific forms of $\Upsilon$ will be presented in Section \ref{Existence}.  Condition \eqref{decres-rho} is crucial for deriving the interior gradient estimate (see \cite{CSN-IV}).
	
	Next, we provide an example showing that uniqueness does not hold in Theorem \ref{main}. 
	\begin{example}
		\label{example}
		Consider a sphere in $\mathbb{R}^3$ given by
		\[
		\mathbb{S}(h)=\{(x,y,z)\in\mathbb{R}^3; x^2+y^2+(z+h)^2=R_h^2\},
		\]
		where $1/\sqrt{2}\leq r<1$, $0\leq h\leq \frac{2r^2-1}{\sqrt{1-r^2}}$ and $R_h=\sqrt{1+h^2+2h\sqrt{1-r^2}}$. We denote $\mathbb{S}(0)=\mathbb{S}^2$. Let $M_1=M_1(h)$ be the part of $\mathbb{S}(h)$ above the plane as described in Figure \ref{fig2} below, given by $z=\sqrt{1-r^2}$. Now, let $M_2=M_2(h)$ be the reflection of $\mathbb{S}(h)- M_1$ with respect to the plane. 
		\begin{center}
			\begin{figure}[h]
				\includegraphics[scale=0.7]{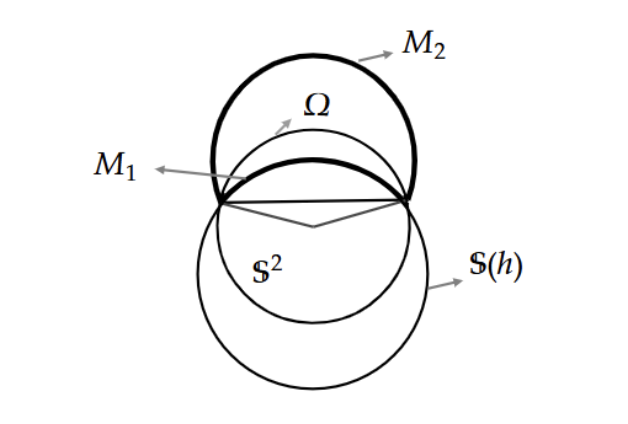}
				\caption{Non-uniqueness of radial graphs for $H>0$.}
				\label{fig2}
			\end{figure}
		\end{center}
		Under these conditions, $M_1$ and $M_2$ are radial graphs over the domain
		\[
		\Omega=\{(x,y,z)\in\mathbb{R}^3; z>\sqrt{1-r^2}\}\subset\mathbb{S}^2,
		\]
		and both surfaces have the same mean curvature $H_h=\frac{1}{R_h}>0$. Moreover, we have $0\leq H_h\leq 1$.
	\end{example}
	
	\section{Height and Boundary Gradient Estimates}
	\label{Estimates}
	In this section, we establish a priori estimates for the height and gradient along the boundary of the solutions of \eqref{pd2}.  First, we deduce some well-known properties of the distance function $d=\textrm{dist}(\cdot, \partial\Omega)$ from $\partial\Omega$. We denote by $\Omega_\epsilon=\{x\in\Omega : d(x)<\epsilon\}$ and $\Gamma_\epsilon=\{x\in\Omega : d(x)=\epsilon\}$. Since $\partial\Omega$ is smooth, $d$ is smooth in a small tubular neighborhood $\Omega_\epsilon$ of $\partial\Omega$. We may choose a smooth local orthonormal frame $e_1,\ldots, e_n$ on $\Omega_\epsilon$ such that $e_n=\nabla d$ is the unit inward normal vector along $\Gamma_d$, for $0\leq d\leq\epsilon$. It follows from $|\nabla d|=1$ that 
	\begin{equation}
		\label{d1}
		\sum_i\nabla_i d\nabla_{ij} d=0
	\end{equation}
	for any $0\leq j\leq n-1$. We also have
	\begin{equation}
		\label{d2}
		\Delta d|_{\Gamma_\epsilon}= -(n-1)H|_{\Gamma_\epsilon},
	\end{equation}
	where $H|_{\Gamma_\epsilon}$ denotes the mean curvature of $\Gamma_\epsilon$ with respect to the normal $e_n=\nabla d$. Indeed, \eqref{d2} holds if we replace $\Omega_\epsilon$ with the larger subset $\Omega_0 \subset \Omega$, which includes points connected to $\partial\Omega$ by a unique minimizing geodesic. It was demonstrated in \cite{YY-NIR} that the regularity of the function $d$ in $\Omega_0$ matches that of $\partial\Omega$. Using this observation, we follow \cite{DL} and prove the following result.
	\begin{lemma}
		\label{lemma-d}
		Let $y_0\in\partial\Omega$ be the closest point to a given point $x_0\in\Gamma_\epsilon\subset\Omega_0$. Then,
		\begin{equation}
			\label{d3}
			H|_{\Gamma_\epsilon}(x_0)\geq \mathcal H_{\partial\Omega}(y_0).
		\end{equation}
	\end{lemma}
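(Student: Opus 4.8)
The plan is to compare the mean curvature of the level set $\Gamma_\epsilon$ at $x_0$ with that of $\partial\Omega$ at the foot point $y_0$ by monitoring how the mean curvature of parallel hypersurfaces evolves along the minimizing geodesic from $y_0$ to $x_0$. Since $x_0\in\Omega_0$, the distance function $d$ is smooth near the minimizing geodesic $\gamma$ joining $y_0$ to $x_0$, and $e_n=\nabla d$ is the unit tangent to $\gamma$. First I would set $\phi(t) = \Delta d$ evaluated at the point $\gamma(t)$ at distance $t$ from $\partial\Omega$, so that by \eqref{d2} we have $\phi(t) = -(n-1)H|_{\Gamma_t}(\gamma(t))$; the goal is to show $\phi(\epsilon)\le \phi(0)$, i.e. that $-(n-1)H|_{\Gamma_\epsilon}(x_0) \le -(n-1)\mathcal H_{\partial\Omega}(y_0)$.

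The key step is the Riccati-type (Bochner) equation for the distance function. Differentiating $|\nabla d|^2 = 1$ and using the Bochner formula on $\mathbb S^n$, one obtains along $\gamma$
\begin{equation}
	\frac{d}{dt}(\Delta d) = -|\nabla^2 d|^2 - \mathrm{Ric}(\nabla d,\nabla d) = -|\nabla^2 d|^2 - (n-1),
\end{equation}
using that the Ricci curvature of the unit sphere $\mathbb S^n$ equals $(n-1)$ times the metric. Since $\nabla^2 d$ restricted to the tangent space of $\Gamma_t$ is the shape operator of $\Gamma_t$ (and $\nabla^2 d(\nabla d,\cdot)=0$ by \eqref{d1}), Cauchy–Schwarz gives $|\nabla^2 d|^2 \ge \frac{(\Delta d)^2}{n-1}$. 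Hence $\phi' \le -\frac{\phi^2}{n-1} - (n-1) \le 0$ along $\gamma$, so $\phi$ is non-increasing, which yields $\Delta d(x_0) \le \Delta d(y_0)$ and therefore $H|_{\Gamma_\epsilon}(x_0) \ge \mathcal H_{\partial\Omega}(y_0)$, as claimed. (The inequality $\phi'\le -\phi^2/(n-1)$ alone already suffices for monotonicity regardless of the sign of $\phi$.)

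The main technical point to be careful about — rather than a deep obstacle — is the regularity and the setup: one must invoke the result of \cite{YY-NIR} cited in the text to know that $d$ is as smooth as $\partial\Omega$ on $\Omega_0$, so that the Riccati equation holds classically along the whole segment $\gamma([0,\epsilon])$, and one must check that $\gamma$ stays in $\Omega_0$ (which holds because the minimizing geodesic from $y_0$ to $x_0$ realizes the distance at each of its points, a standard fact). I would also make explicit the identification of $\nabla^2 d$ on $T\Gamma_t$ with the second fundamental form of $\Gamma_t$ with respect to $\nabla d$, and note the sign convention matching \eqref{d2}, so that the chain $\phi(\epsilon)\le\phi(0)$ translates correctly into \eqref{d3}. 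Everything else is the routine ODE comparison carried out above.
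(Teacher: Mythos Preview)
Your argument is correct and essentially the same as the paper's: both monitor the evolution of the mean curvature of the parallel hypersurfaces $\Gamma_t$ along the normal geodesic via the Riccati equation in $\mathbb S^n$ and conclude monotonicity from the positivity of $\mathrm{tr}(A_t^2)+\mathrm{Ric}(e_n,e_n)$. The paper derives the matrix Riccati equation $A'_\epsilon=A_\epsilon^2+R_\epsilon$ directly for the shape operator and then takes traces, whereas you package the same computation as the Bochner identity for $\Delta d$; your Cauchy--Schwarz step is harmless but unnecessary, since $-(n-1)<0$ already forces $\phi'<0$.
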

	\begin{proof}
		We compute on $\Gamma_\epsilon$, for $1\leq i,j\leq n-1$,
		\begin{equation}
			\label{d4}
			-\frac{d}{d\epsilon}\langle A_\epsilon e_i, e_j\rangle = e_n\langle\nabla_i e_n, e_j\rangle =\langle A_\epsilon e_i, A_\epsilon e_j\rangle -\langle R(e_i, e_n)e_n,e_j\rangle
		\end{equation}
		where $A_\epsilon$ is the Weingarten operator of $\Gamma_\epsilon$ and $R$ is the curvature tensor in $\mathbb{S}^n$. On the other hand,
		\begin{align}
			\begin{split}
				\frac{d}{d\epsilon}\langle A_\epsilon e_i, e_j\rangle &= \langle\nabla_n A_\epsilon e_i, e_j\rangle+\langle A_\epsilon e_i, \nabla_n e_j\rangle \\ &= \langle A'_\epsilon e_i, e_j\rangle-2\langle A_\epsilon e_i, A_\epsilon e_j\rangle.
			\end{split}
		\end{align}
		Adding these equations, we obtain the well-known Ricatti equation
		\begin{equation}
			\label{ricatti}
			A'_\epsilon -A_\epsilon^2-R_\epsilon=0,
		\end{equation}
		where $R_\epsilon= R(\cdot, e_n)e_n|_{d=\epsilon}$. Thus, taking traces,
		\begin{equation}
			n\dfrac{d}{d\epsilon}H|_{\Gamma_{\epsilon}}=\textrm{tr}(A_\epsilon^2+R_\epsilon)>0.
		\end{equation}
		It follows that $H|_{\Gamma_{\epsilon}}$ does not decrease with increasing $\epsilon$.
	\end{proof}
	We are now ready to establish the a priori height estimate.
	\begin{proposition}
		Let $u\in C^2(\Omega) \cap C^0(\overline{\Omega})$ be a solution of \eqref{pd2}
		satisfying $u\leq \overline{u}$. Then, there exists a uniform constant  $L = L(n, \Omega, \overline{u})$ such that  
		\begin{equation}
			\label{height}
			|u| \leq L.
		\end{equation}
	\end{proposition}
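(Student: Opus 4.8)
The plan is to prove the two one‑sided bounds separately. The upper bound is immediate: since $\overline u$ is a fixed smooth function on the compact set $\overline\Omega$, the hypothesis $u\le\overline u$ gives $u\le\max_{\overline\Omega}\overline u$. So the whole content lies in the lower bound for $u$.

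For the lower bound I would argue at an interior minimum, using the monotonicity \eqref{decres-rho}. Write $m=\min_{\overline\Omega}u=u(x_0)$. If $x_0\in\partial\Omega$ then $m=\varphi(x_0)\ge\min_{\partial\Omega}\varphi$ and we are done, so assume $x_0$ is an interior minimum. Then $\nabla u(x_0)=0$, so $W=1$ and $a^{ij}=\delta_{ij}$ at $x_0$, while $\nabla^2u(x_0)\ge0$. Rewriting \eqref{pd2} in the form $\big(\delta_{ij}-\nabla_iu\,\nabla_ju/W^2\big)\nabla_{ij}u=n+e^{u}W\,\Upsilon(e^{u}x)$ and evaluating at $x_0$ gives
\[
0\le\Delta u(x_0)=n+e^{m}\,\Upsilon(e^{m}x_0),
\]
hence $e^{m}\,\Upsilon(e^{m}x_0)\ge-n$. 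Setting $\rho_0=e^{m}=\min_{\overline\Omega}e^{u}\le\overline\rho$ and $\phi(\rho)=\rho\,\Upsilon(\rho x_0)$, condition \eqref{decres-rho} says $\phi$ is non‑decreasing on $(0,\overline\rho]$, and we have obtained $\phi(\rho_0)\ge-n$. A lower bound $\rho_0\ge\rho_*>0$ then follows once one knows that $\phi(\rho)<-n$ for all $\rho<\rho_*$, with $\rho_*$ depending only on $n,\Omega,\overline u$ — equivalently, that $\Upsilon$ is sufficiently singular at the origin. This is where the precise choice of the auxiliary function $\Upsilon$ (the two forms fixed in Section~\ref{Existence}) is used; for a form behaving like $\Upsilon(X)\sim -c\,|X|^{-2}$ near the origin one has $\phi(\rho)\sim-c/\rho\to-\infty$, yielding $\rho_0\ge c/n$ immediately.

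The main obstacle is precisely this last point: producing a constant with the stated dependence requires the right quantitative control of $\Upsilon$ near the origin, or, failing that, a genuine barrier. An alternative I would keep in mind is a comparison argument: from $\Upsilon\le0$ one gets $\big(\delta_{ij}-\nabla_iu\,\nabla_ju/W^2\big)\nabla_{ij}u\le n$, so $u$ is a subsolution of the linear elliptic operator $w\mapsto a^{ij}(\nabla u)\nabla_{ij}w$, which has no zero order term and whose coefficient matrix has trace $n-|\nabla u|^2/W^2\ge n-1$ and eigenvalues in $(0,1]$; a lower barrier $w\in C^2(\overline\Omega)$ with $a^{ij}\nabla_{ij}w\ge n$ in $\Omega$ and $w\le\varphi$ on $\partial\Omega$ would then force $u\ge w\ge\min_{\overline\Omega}w$ by the comparison principle. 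Such a $w$ is easy to write down when $\overline\Omega$ lies in a hemisphere — take a suitably scaled restriction of a linear function $x\mapsto-A\langle x,a\rangle$, whose covariant Hessian is $A\langle x,a\rangle\,g$, a positive multiple of the metric there — but not in general, since $\overline\Omega$ may contain a great circle and hence admit no strictly convex function. Building $w$ in that case would require the distance function $d=\textrm{dist}(\cdot,\partial\Omega)$ and the estimate $\Delta d=-(n-1)H|_{\Gamma_d}\le-(n-1)\min_{\partial\Omega}\mathcal H_{\partial\Omega}<0$ coming from \eqref{d2} and Lemma~\ref{lemma-d} — exactly the place where the positivity of $\mathcal H_{\partial\Omega}$ enters — together with a patching argument across the cut locus of $\partial\Omega$, where $d$ fails to be smooth.
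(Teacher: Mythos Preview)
Your primary argument via the interior minimum has a genuine gap. The bound you extract, $\phi(\rho_0)=\rho_0\,\Upsilon(\rho_0 x_0)\ge -n$, only yields a lower bound on $\rho_0$ if $\phi(\rho)\to-\infty$ as $\rho\to 0^+$. But for the second auxiliary function $\Theta_t$ in Section~\ref{Existence} with $t=1$ one has $\Upsilon\equiv -nH$, hence $\phi(\rho)=-nH\rho\to 0$: the inequality $\phi(\rho_0)\ge -n$ then says only $\rho_0\le 1/H$, which is the wrong direction. Even for $t<1$ the bound you would obtain depends on $t$, $\epsilon$ and the exponent $C$, so it is not of the form $L=L(n,\Omega,\overline u)$ and would not be uniform in the homotopy parameter --- which is exactly what the degree argument in Section~\ref{Existence} requires. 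In short, the maximum-principle-at-a-point approach cannot give a height estimate independent of $\Upsilon$; that independence is the whole point of the proposition.

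What the paper actually does is your ``alternative'': it builds a lower barrier $w=f\circ d+\inf_{\partial\Omega}\varphi$ out of the distance function, with $f(d)=-\frac{e^{CA}}{C}(1-e^{-Cd})$, and shows $\tilde Q[w]>0\ge\tilde Q[u]$ on the set $\Omega_0$ where $d$ is smooth; the dominant positive term is precisely $f'\Delta d\ge -f'(n-1)\mathcal H_{\partial\Omega}$, using Lemma~\ref{lemma-d} and the hypothesis $\mathcal H_{\partial\Omega}>0$. The cut-locus issue you correctly anticipate is not handled by patching $w$ but by a contradiction argument borrowed from \cite{DL}: assuming $w-u$ has a positive interior maximum at $y$, one studies the level set $\{u=u(y)\}$ together with the interior sphere condition to force the minimizing geodesic from $y$ to $\partial\Omega$ to be unique, i.e.\ $y\in\Omega_0$, where the barrier inequality already rules out a maximum. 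So your second paragraph points in the right direction, but it is the argument you need to write out, not a fallback; and the resolution at the cut locus is more delicate than a smoothing or patching.
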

	\begin{proof}
		We construct lower-barrier functions for $u$ in \eqref{pd2} of the form
		\begin{equation}
			w = f\circ d + \inf_{\partial\Omega}\varphi 
		\end{equation}
		where $f$ is a real function that will be chosen later. First, we observe that a solution $u$ of \eqref{pd2} satisfies
		\begin{equation}
			\label{H1}
			\tilde Q[u]:=\sum_{i,j}\dfrac{1}{W}\left(\delta_{ij}-\frac{\nabla_iu\nabla_ju}{W^2}\right)\nabla_{ij}u-\dfrac{n}{W}=e^u\Upsilon\leq 0.
		\end{equation}
		Along $\Omega_0$ we have
		\begin{align*}
			\tilde Q[w]& =  \sum_{i,j}\dfrac{1}{\sqrt{1+(f')^2}}\left(\delta_{ij}-\frac{(f')^2\nabla_i d \nabla_j d}{1+(f')^2}\right)(f''\nabla_i d\nabla_jd+f'\nabla_{ij} d)\\ &-\dfrac{n}{\sqrt{1+(f')^2}}\\
			&= \dfrac{1}{\sqrt{1+(f')^2}}\left(f' \Delta d+ f''-n\right)-\frac{(f')^2f''}{(1+(f')^2)^{3/2}}.
		\end{align*}
		We choose
		\begin{equation}
			\label{H2}
			f(d)=-\frac{e^{CA}}{C}(1-e^{-Cd}),
		\end{equation}
		where the constant $C>0$ will be chosen later and $A$ is such that $A>\textrm{diam}(\Omega)$. Then, as $f''=-Cf'$, we obtain 
		\begin{equation}
			\label{H3}
			\tilde Q[w]= \dfrac{f'\Delta d-n}{\sqrt{1+(f')^2}} -\dfrac{Cf'}{(1+(f')^2)^{3/2}}.
		\end{equation}
		It follows from\eqref{d2} and Lemma \ref{lemma-d} that
		\begin{equation}
			\label{H4}
			\Delta d|_{\Gamma_d}= -(n-1)H|_{\Gamma_d}\leq -(n-1) \mathcal{H}_{\partial \Omega}<0,
		\end{equation}
		since $\mathcal{H}_{\partial \Omega}>0$. Thus, as $f'(d)= -e^{C(A-d)}<0$, we have
		\begin{equation}
			\label{H5}
			\tilde Q[w]\geq -\dfrac{(n-1)f'\mathcal{H}_{\partial \Omega}+n}{\sqrt{1+(f')^2}} -\dfrac{Cf'}{(1+(f')^2)^{3/2}}.
		\end{equation}
		As $f'\rightarrow -\infty$ as $C\rightarrow +\infty$, choosing $C=C(n,\Omega)$ sufficiently large, we obtain $\tilde Q[w]> 0$ since $\mathcal{H}_{\partial \Omega}>0$. We conclude that at points of $\Omega_0$ it holds
		\begin{equation}\label{18}
			\left\{\begin{array}{cl}
				\tilde Q[w]>\tilde Q[u]& \textrm{in } \Omega_0 \\
				w\leq u & \textrm{on } \partial \Omega.
			\end{array}\right.    
		\end{equation}
		Let us prove that $w\leq u$ in $\Omega$. We adopt the reasoning outlined in \cite{DL}. Suppose, by contradiction, that there are points where the function $\hat u:=w-u$ satisfies $\hat u>0$. Then, $m:=\hat u(y)>0$ at a point of maximum $y\in \overline\Omega$ and $\hat u$. Let $\gamma$ be a minimizing geodesic that connects $y$ to $\partial \Omega$ and such that $d=\textrm{dist}(y,\partial \Omega)$ is achieved. So, we can write $\gamma(t)=\exp_{y_0}(t\eta)$, $0\leq t\leq d$, where $y_0\in \partial \Omega$ and $\eta$ is a unit vector in $T_{y_0}\mathbb S^n$. Since $\gamma$ is minimizing, we have $\textrm{dist}(\gamma(t),\partial\Omega)=t$ and the function $w$ restricted to $\gamma$ is differentiable with $w'(\gamma(t))=-e^{C(A-t)}$. Since the maximum of $\hat u$ restricted to $\gamma$ occurs at $t=d$, at the point $y$, then
		\begin{equation}
			\hat u '(\gamma(d))=w'(\gamma(d))-u'(\gamma(d))\geq 0.
		\end{equation}
		Hence
		\begin{equation}
			\langle \nabla u(y),\gamma '(d) \rangle =u'(\gamma(d))\leq w'(\gamma(d))=-e^C(A-d)<0.
		\end{equation}
		In particular, $\nabla u(y)\neq0$ and therefore, the level hypersurface 
		\begin{equation*}
			\mathcal{L}=\{x\in \Omega \cap B_r(y)~|~u(x)=u(y)\}
		\end{equation*}
		is regular for sufficiently small radius $r$. Along $\mathcal{L}$ we have
		\begin{equation}
			\hat u(x)-w(x) = \hat u(y)-w(y)\geq \hat u(x)-w(y).
		\end{equation}
		Thus
		\begin{equation}\label{eq19}
			w(x)\leq w(y), \quad \forall x\in \mathcal{L}.
		\end{equation}
		Since $w$ is decreasing in $d=\textrm{dist}(\cdot, \partial \Omega)$,  \eqref{eq19} ensures that $d(x)\geq d(y)=d$, for any $x\in \mathcal{L}$. Thus, the points in $\mathcal{L}$ are at a distance at least $d$ from $\partial \Omega$, i.e.,  $d(x)\geq d$,
		for any $x\in\mathcal L$. Since $\mathcal{L}$ is of class $C^2$ it satisfies the interior sphere condition: there exists a small ball $B_\epsilon(z)$ touching $\mathcal{L}$ at $y\in \mathcal{L}$ and such that $B_\epsilon (y)$ is on the side for which $-\nabla u(y)$ and $\gamma' (d)$ point. Therefore, the points $x\in B_\epsilon (z) $ satisfy $u(x)\geq u(y)$ and hence
		\begin{equation*}
			m-w(x)=\hat u(y)-w(x) \geq -u(y)= \hat u(y)-w(y)=m-w(y),
		\end{equation*}
		for any $x\in B_\epsilon(z)$, where we use the definition of $m$. Again, as $w$ is decreasing in $d$, we conclude that
		\begin{equation}
			\label{20}
			d(x)=\textrm{dist}(x,\partial \Omega)\leq \textrm{dist}(y,\partial \Omega)=d(y)=d
		\end{equation}
		for $x\in B_\epsilon(z)$ and consequently $B_\epsilon (z)\subset \Omega$ and it is far away from $\partial \Omega$. Therefore, we can extend the geodesic $\gamma$ through $B_\epsilon(z)$. We claim that the center $z$ of $B_\epsilon(z)$ belongs to the this extension of the geodesic $\gamma$. In fact, if the extension of $\gamma$ did not pass through $z$, then the curve constructed by joining the segment of $\gamma$ from $y_0$ to $y \in \partial B_\epsilon(z)$ would have a length smaller than that of a minimizing geodesic joining $z$ to $y_0 \in \partial\Omega$, which would lead to a contradiction. Thus, if there exists at least two distinct minimizing geodesics joining $y$ to the boundary $\partial \Omega$, then the point $z$ belongs to the extension of both geodesics after its intersection that occurs at $y$. Choosing $\epsilon >0$ small enough, we conclude that this configuration cannot occur. Therefore there exists a unique minimizing geodesic that realizes the distance $d=\textrm{dist}(y,\partial \Omega)$ and so $y\in \Omega_0$ However, in this case, $\hat u(y)\leq 0$, a contradiction, by our previous computation. We conclude that $w\leq u$ throughout $\overline{\Omega}$ and hence $w$ is a continuous (viscosity) subsolution for the Dirichlet problem 
		\begin{equation}
			\label{pd-tilde}
			\left\{\begin{array}{cl}
				\tilde Q[u]= 0 & \textrm{in } \Omega_0 \\
				u = \varphi & \textrm{on } \partial \Omega.
			\end{array}\right.    
		\end{equation}
		Then, any solution of \eqref{pd2} satisfies $w\leq u\leq \overline{u}$, so  \eqref{height} is proved.
	\end{proof}
	
	We now address the boundary gradient estimate. Initially, we extend $\varphi$ to a tubular neighborhood $\Omega_\epsilon$ of $\partial\Omega$ by defining $\varphi(q) = \varphi(p)$ for $q = \exp_p (s e_n)$, where $p \in \partial\Omega$ and $e_n = \nabla d$ is the unit normal pointing toward the interior of $\Omega$. We construct barriers for $u$ in \eqref{pd2} of the form
	\begin{equation}
		w = \varphi + h \circ d
	\end{equation}
	defined in a tubular neighborhood $\Omega_\epsilon$ of $\Omega$, for some real function $h$ to be chosen later and $d=\textrm{dist}(\cdot, \partial\Omega)$. Let $\tilde Q$ be the operator defined in \eqref{H1}. Thus,
	\begin{equation}
		\label{BD1}
		\tilde{Q}[w]=\dfrac{\Delta w-n}{\sqrt{1+|\nabla w|^2}}-\frac{\nabla_i w\nabla_jw\nabla_{ij}w}{(1+|\nabla w|^2)^{3/2}}.
	\end{equation}
	We have
	\begin{equation}
		\nabla w = \nabla \varphi+f'\nabla d \quad \textrm{and} \quad   \Delta w = \Delta\varphi+f''+f'\Delta d.
	\end{equation}   
	In particular, $|\nabla w|^2 =|\nabla \varphi|^2+(f')^2$. We now compute
	\begin{align}
		\begin{split}
			\label{BD2}
			\nabla_iw  \nabla_jw & \nabla_{ij}w = \nabla_i\varphi \nabla_j \varphi \nabla_{ij}\varphi + f' \nabla_i\varphi \nabla_j\varphi \nabla_{ij}d \\  + & 2f' \nabla_i\varphi\nabla_j d \nabla_{ij}\varphi   +  (f') ^2 \nabla_id\nabla_j d \nabla_{ij}\varphi + (f')^2 f''
		\end{split}
	\end{align}     
	where we used $\langle \nabla d,\nabla \varphi\rangle =0$ and $|\nabla d|=1$. We define
	\begin{equation}
		\label{BD3}
		f(d)=-\mu\ln{(1+Kd)}
	\end{equation}
	for certain positive constants $\mu$ and $K$ to be chosen later. Notice that
	\begin{equation}
		\label{BD4}
		f'(d)=\frac{-\mu K}{1+\mu d}\quad \textrm{and}\quad f''=\frac{1}{\mu}(f')^2.
	\end{equation}
	Replacing \eqref{BD2} and \eqref{BD4} into the expression for $\tilde Q[w]$ in \eqref{BD1}, we obtain
	\begin{align}
		\begin{split}
			\label{BD5}
			\tilde{Q}&[w] 
			= \dfrac{f'\Delta d+\Delta \varphi-n}{\sqrt{1+|\nabla\varphi|^2+(f')^2}} - \dfrac{\mu^{-1}(f')^2(1+|\nabla \varphi|^2)}{(1+|\nabla \varphi|^2+(f')^2)^{3/2}}\\ &  - \dfrac{1}{(1+|\nabla \varphi|^2+(f')^2)^{3/2}}\Big(\nabla_i\varphi\nabla_j\varphi\nabla_{ij}\varphi+f'\nabla_i\varphi\nabla_j\varphi\nabla_{ij}d  \\ &  +2f'\nabla_i\varphi\nabla_jd\nabla_{ij}\varphi+(f')^2\nabla_id\nabla_jd\nabla_{ij}\varphi\Big).
		\end{split}
	\end{align}
	We choose 
	\begin{equation}
		\label{BD6}
		\mu=\frac{C}{\ln{(1+K)}}
	\end{equation}
	for some constant $C>0$ to be chosen. In particular, $\mu\rightarrow 0$ as $K\rightarrow +\infty$. We also have
	\begin{equation*}
		f'(0)=-\dfrac{CK}{\ln{(1+K)}}\rightarrow -\infty
	\end{equation*}
	Therefore, choosing $K$ and then $C$ large enough, we assure that $\tilde Q[w]>0$ on a small tubular neighborhood $\Omega_\epsilon$ of $\partial\Omega$ since the leading term (asymptotically) in \eqref{BD5} is
	\begin{equation*}
		\dfrac{f'\Delta d}{\sqrt{1+|\nabla\varphi|^2+(f')^2}}=-\dfrac{f'(n-1)\mathcal{H}_{\partial\Omega}}{\sqrt{1+|\nabla\varphi|^2+(f')^2}}>0,
	\end{equation*}
	as $\mathcal{H}_{\partial\Omega}>0$. Moreover, from the height estimates it follows that $w\leq u$ in $\partial\Omega$ for uniform large constants $K$ and $C$. Thus, $w$ is a lower barrier for the Dirichlet problem \eqref{pd-tilde}. Then, from $w\leq u\leq \overline{u}$ in $\Omega_\epsilon$ and $w=u=\overline{u}$ on $\partial\Omega$ we obtain the following result.
	\begin{proposition}
		\label{P-BD}
		Let $\Omega$ be as in Theorem \ref{main} and $ u\leq \overline{u}$ be a solution of (\ref{pd2}). Then
		\begin{equation}
			|\nabla u|\leq C\qquad \mbox{on}\quad \partial \Omega,
		\end{equation}
		for a uniform constant $C=C(n,\Omega,\overline{u})$.
	\end{proposition}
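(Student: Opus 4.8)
The plan is to finish the barrier construction carried out above. The function $w=\varphi+f\circ d$ with $f$ as in \eqref{BD3}--\eqref{BD6} will serve as a lower barrier, the hypothesis $u\le\overline u$ provides an upper barrier, both coincide with $u$ on $\partial\Omega$, and trapping $u$ between $w$ and $\overline u$ on a tubular neighborhood of $\partial\Omega$ pins the inward normal derivative of $u$ along $\partial\Omega$ between two uniformly controlled quantities; since $u=\varphi$ on $\partial\Omega$ already fixes the tangential part of $\nabla u$, this yields the estimate.

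First I would combine the two differential inequalities. By \eqref{H1}, a solution of \eqref{pd2} satisfies $\tilde Q[u]=e^u\Upsilon\le 0$ in $\Omega$, while \eqref{BD5}, \eqref{BD6} and $\mathcal H_{\partial\Omega}>0$ give $\tilde Q[w]>0$ on a tubular neighborhood $\Omega_\epsilon$ once $K$, and then $C$, are chosen large (uniformly in $n$, $\Omega$, $\overline u$): in \eqref{BD5} the term $-f'(n-1)\mathcal H_{\partial\Omega}/\sqrt{1+|\nabla\varphi|^2+(f')^2}$ is asymptotically dominant and the remaining terms are controlled by $|\nabla\varphi|$, $|\nabla^2\varphi|$ and $|\nabla^2 d|$ on $\Omega_\epsilon$. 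Thus $\tilde Q[w]>\tilde Q[u]$ holds pointwise on $\Omega_\epsilon$. On $\partial\Omega_\epsilon=\partial\Omega\cup\Gamma_\epsilon$ we have $w=\varphi=u$ on $\partial\Omega$ (as $f(0)=0$), whereas on $\Gamma_\epsilon$ the identity $f(\epsilon)=-C\ln(1+K\epsilon)/\ln(1+K)\to -C$ as $K\to\infty$, combined with the height bound $|u|\le L$, gives $w|_{\Gamma_\epsilon}\le\sup_{\partial\Omega}\varphi-C\le -L\le u|_{\Gamma_\epsilon}$ after enlarging $C$ once more. Since $\tilde Q$ is quasilinear, elliptic, of prescribed-mean-curvature type, and carries no zero-order term in $u$, the inequality $\tilde Q[w]>\tilde Q[u]$ rules out an interior positive maximum of $w-u$ in $\Omega_\epsilon$ (at such a point gradients coincide, the coefficients match, and the Hessian difference is nonpositive, contradicting strict positivity); together with $w\le u$ on $\partial\Omega_\epsilon$ this gives $w\le u$ throughout $\Omega_\epsilon$.

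Next I would extract the gradient bound. From $w\le u\le\overline u$ in $\Omega_\epsilon$ and $w=u=\overline u$ on $\partial\Omega$ (recall $\overline\Sigma$ has boundary data $\varphi$), differentiate along the inward unit geodesics issuing from $\partial\Omega$, where $u-w\ge 0$ and $u-\overline u\le 0$ both vanish; using $\langle\nabla\varphi,\nabla d\rangle=0$, so that the inward normal derivative $\partial_n w$ equals $f'(0)$, this gives on $\partial\Omega$
\begin{equation*}
	f'(0)=\partial_n w\le\partial_n u\le\partial_n\overline u .
\end{equation*}
Both $|f'(0)|=CK/\ln(1+K)$ and $\sup_{\partial\Omega}|\partial_n\overline u|$ are bounded by constants depending only on $n$, $\Omega$, $\overline u$. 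Since $u=\varphi$ on $\partial\Omega$ forces $\nabla^{\partial\Omega}u=\nabla^{\partial\Omega}\varphi$, we conclude $|\nabla u|^2=|\nabla^{\partial\Omega}\varphi|^2+(\partial_n u)^2\le C(n,\Omega,\overline u)^2$ on $\partial\Omega$.

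The one point needing care is the uniformity of the constants: the barrier slope $f'(0)$ and the threshold $C$ that forces $w\le u$ on $\Gamma_\epsilon$ must be chosen depending only on $n$, $\Omega$ and $\overline u$, which is precisely the ordered selection ``$K$ large, then $C$ large'' read off from \eqref{BD5}--\eqref{BD6}; no ingredient beyond the barrier computation already displayed is required. The comparison step itself is routine since $\tilde Q$ has no zero-order dependence on $u$, and the only regularity used beyond continuity up to $\partial\Omega$ is that $\Gamma_\epsilon$ lies in the interior of $\Omega$, where $u$ is $C^2$.
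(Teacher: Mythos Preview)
Your proposal is correct and follows essentially the same approach as the paper: build the lower barrier $w=\varphi+f\circ d$ on $\Omega_\epsilon$, use $\tilde Q[w]>0\ge\tilde Q[u]$ together with $w\le u$ on $\partial\Omega_\epsilon$ to obtain $w\le u$ in $\Omega_\epsilon$, then read off the normal-derivative bound from $w\le u\le\overline u$ with equality on $\partial\Omega$. You spell out two steps the paper leaves implicit---the comparison principle on $\Omega_\epsilon$ (valid since $\tilde Q$ has no zero-order dependence on $u$) and the decomposition of $\nabla u$ on $\partial\Omega$ into its tangential part $\nabla^{\partial\Omega}\varphi$ and normal part $\partial_n u$---but these are the standard details behind the paper's concluding sentence ``from $w\le u\le\overline u$ in $\Omega_\epsilon$ and $w=u=\overline u$ on $\partial\Omega$ we obtain the following result.''
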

	
	\section{Interior Gradient Estimates}
	\label{interior}

	Now, as the last step in proving a priori estimates for solutions $u$ of \eqref{pd2} satisfying $u\leq \overline{u}$, we will establish an interior estimate for the gradient of solutions.
	\begin{proposition}
		Let $\Omega$ be as in Theorem \ref{main} and $u\leq \overline{u}$ be a solution of (\ref{pd2}). Assume that $\Upsilon$ satisfies \eqref{decres-rho}, then there exists a uniform constant $C=C(n,\Omega,\overline{u})$ such that
		\begin{equation}
			\label{IGE}
			|\nabla u|\leq C \quad \textrm{in} \quad \Omega.
		\end{equation}
	\end{proposition}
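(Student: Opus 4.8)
The plan is to adapt the standard maximum-principle argument for the gradient estimate (as used for star-shaped hypersurfaces in \cite{CSN-IV}) to the radial-graph equation, invoking condition \eqref{decres-rho} precisely at the point where the zeroth-order term of the linearization would otherwise be uncontrollable. Write $W=\sqrt{1+|\nabla u|^2}$, set
\[
a^{ij}=\delta_{ij}-\frac{\nabla_iu\,\nabla_ju}{W^2},\qquad L=\sum_{i,j}a^{ij}\nabla_{ij},
\]
and rewrite \eqref{pd2} as $\sum_{i,j}a^{ij}\nabla_{ij}u=n+e^{u}W\,\Upsilon(e^{u}p)=:n+\Psi(p,u,\nabla u)$. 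First I would differentiate this identity along a tangential direction $e_k$, commute covariant derivatives on $\mathbb S^n$ — whose curvature, being constant, contributes only terms bounded in terms of $n$ (and in fact with a favorable sign, as in Korevaar's estimate for manifolds with nonnegative Ricci curvature) — and contract with $\nabla_k u$. Dividing the resulting identity by $W^2$ to make all coefficients bounded, one obtains a differential inequality, schematically of the form
\[
L\,(\log W)\ \geq\ -\,C_1\ +\ \frac{\partial_u\Psi}{W^2}\,|\nabla u|^2\ -\ (\text{terms absorbable, by Cauchy--Schwarz, into the nonnegative Hessian term}),
\]
where $C_1$ depends only on $n$, on $\sup_{\overline\Omega}|u|\le L$ (from the height estimate \eqref{height}), and on bounds for $\Upsilon$ and $\nabla_X\Upsilon$ on the compact region determined by $|u|\le L$.

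The decisive point is the term carrying $\partial_u\Psi$. Since $\Psi=e^{u}W\,\Upsilon(e^{u}p)$ and $W$ does not depend on $u$, we have $\partial_u\Psi=W\,\partial_u\!\big(e^{u}\Upsilon(e^{u}p)\big)=W\,\frac{\partial}{\partial\rho}\!\big(\rho\,\Upsilon(\rho x)\big)\big|_{\rho=e^{u}}\ge 0$ by hypothesis \eqref{decres-rho}; here the assumption $u\le\overline u$ is used to guarantee $e^{u}\le\overline\rho$, so that \eqref{decres-rho} is available throughout the range of $\rho=e^{u}$ actually attained by the solution. Thus the dangerous zeroth-order contribution — the one flagged above as ``positive and not easily controlled'' for the original equation $Q[u]=-nH$ — has the correct sign and may simply be discarded. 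The remaining bad terms are handled in the usual way: test with $\Phi=\eta^{2}e^{\kappa u}W$, where $\eta$ is a cutoff supported in a small geodesic ball and $\kappa=\kappa(n,L,\Upsilon)$ is chosen large, so that $L(e^{\kappa u})$ produces the coercive term $\kappa^{2}e^{\kappa u}|\nabla u|^{2}/W$ (comparable to $\kappa^{2}e^{\kappa u}W$ when $W$ is large), which dominates the leftover multiples of $W$ once $\kappa$ is large relative to $e^{L}$, $\sup|\Upsilon|$ and $C_1$. At an interior maximum $x_0$ of $\Phi$ one has $\nabla\Phi(x_0)=0$ and $\nabla^2\Phi(x_0)\le 0$; inserting the relation $\nabla\eta/\eta=-(\nabla W/W+\kappa\nabla u)$ into $L\Phi(x_0)\le 0$ and using the ellipticity of $(a^{ij})$ together with the sign information above forces $\eta(x_0)\,W(x_0)\le C(n,\Omega,L,\Upsilon)$, hence $W\le C$ on the concentric half-ball; a covering argument then yields $|\nabla u|\le C$ on any fixed compact subset of $\Omega$.

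To obtain the estimate up to $\partial\Omega$ as stated, I would combine this interior bound with Proposition \ref{P-BD}: on the collar $\Omega_\epsilon$ the barrier $w=\varphi+f\circ d$ of the previous section already bounds $\nabla u$, while on $\Omega\setminus\Omega_{\epsilon/2}$ the interior argument applies with $\eta\equiv1$; patching the two gives \eqref{IGE} with a constant depending only on $n$, $\Omega$, and $\overline u$. I expect the main obstacle to be the bookkeeping of the first step: one must differentiate $\Psi=e^{u}W\,\Upsilon(e^{u}p)$ and commute the derivatives on $\mathbb S^n$ carefully enough to verify that every term containing $\nabla^{3}u$ either cancels or is dominated by the nonnegative Hessian term, and that no multiple of $W$ with the wrong sign survives beyond what $e^{\kappa u}$ can absorb — keeping in mind that $e^{\kappa u}\ge e^{-\kappa L}$ may be small, which is exactly why \eqref{decres-rho} (rather than a mere one-sided bound on $\Upsilon$) is indispensable.
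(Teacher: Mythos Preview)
Your approach is correct and essentially the same as the paper's: both run a Bernstein-type maximum principle with a test function of the form $(\text{gradient quantity})\times e^{Ku}$, differentiate the equation, commute derivatives via the Ricci identity on $\mathbb S^n$, and invoke \eqref{decres-rho} at exactly the point you identify to give the zeroth-order contribution the favorable sign. The only difference is presentational: the paper dispenses with the cutoff $\eta$ and the covering step by working globally with $\chi=|\nabla u|^2\,e^{2Ku}$, invoking Proposition~\ref{P-BD} directly when the maximum lies on $\partial\Omega$, and at an interior maximum reducing the computation to a cubic polynomial inequality in $v=|\nabla u|^2$ whose leading coefficient becomes positive once $K$ is large.
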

	\begin{proof}
		Let  $u$ be a solution of  $Q[u]=\Upsilon (X)$, $X=e^ux, x\in\Omega$. 	The usual elliptic regularity allow us to assume that $u\in C^3(\Omega)$. We consider the auxiliary test function
		\[
		\chi=ve^{2Ku}
		\]
		where $v=|\nabla u|^2$ and $K$ is a positive constant to be defined later. If $\chi$ achieves its maximum on $\partial\Omega$ we have a bound for $|\nabla u|$ in $\Omega$ as desired. Hence, we may assume that the maximum is attained at an interior point $x_0\in\Omega$. We choose a local orthonormal frame $e_1, \ldots, e_n$ around $x_0$ such that
		\begin{equation}
			\label{CC1}
			e_1=\frac{1}{|\nabla u|} \nabla u
		\end{equation}
		and $\nabla^2 u$ is diagonal at $x_0$. Thus, at $x_0$, we have
		\begin{equation}
			\label{CC2}
			\nabla_1 u= v^{1/2} \quad\textrm{and} \quad \nabla_j u= 0, j\geq 2.
		\end{equation}
		Moreover, it follows from $\nabla \chi(x_0)=0$ that
		\begin{equation}
			\label{CC3}
			\nabla_{11}u=-Kv \quad\textrm{and}\quad \nabla_j v =\nabla_{ij} u=0, j\geq 2.
		\end{equation}
		Now we rewrite equation $Q[u]=\Upsilon$ as
		\begin{equation}
			\label{CC4}
			((1+v)\delta_{ij}-\nabla_i u\nabla_j u)\nabla_{ij} u-(1+v)n =(1+v)^{3/2}e^u\Upsilon.
		\end{equation}
		In particular, at $x_0$, we have
		\begin{equation}
			\label{CC7}
			(1+v)\sum_{i} \nabla_{ii} u=(1+v)^{3/2}e^u\Upsilon -Kv^2+n(1+v).
		\end{equation}
		Differentiating covariantly we obtain
		\begin{align}
			\label{CC5}
			\begin{split}
				\left((1+v)\delta_{ij} - \nabla_i u\nabla_j u\right)\nabla_{ijk}u= -2\left(\nabla_\ell u\nabla_{\ell k} u \, \delta_{ij} - \nabla_i u\nabla_{jk} u\right)\nabla_{ij} u \\  + 2n\nabla_i u\nabla_{ik}u  +3(1+v)^{1/2}\nabla_iu\nabla_{ik} u e^u\Upsilon  + (1+v)^{3/2} \nabla_k(e^u\Upsilon) .
			\end{split}
		\end{align}
		Contracting with $\nabla u$, we conclude from \eqref{CC2}, \eqref{CC3} and \eqref{CC4} that
		\begin{align}
			\label{CC6}
			\begin{split}
				\left((1+v)\delta_{ij} - \nabla_i u\nabla_j u\right)\nabla_k u\nabla_{ijk}u =& -4Kv^2(1+v)^{1/2}e^u\Upsilon \\ +2K^2v^3  -2K^2v^4(1+v)^{-1} +&(1+v)^{3/2} v^{1/2}\nabla_1 (e^u\Upsilon).
			\end{split}
		\end{align}
		In order to eliminate the third derivative in \eqref{CC6} we use that 
		\[
		\nabla_{ij}\chi(x_0)\leq 0.
		\] 
		We compute
		\begin{align}
			\begin{split}
				\label{CC10}
				\nabla_{ij} \chi = &e^{2Ku} \Big( 4K(\nabla_j u \nabla_k u \nabla_{ki} u + Kv\nabla_i u \nabla_j u +\nabla_iu\nabla_ku\nabla_{kj}u)  \\ & + 2(\nabla_{ki}u \nabla_{kj} u + Kv \nabla_{ij} u+  \nabla_k u \nabla_{kij} u) \Big).
			\end{split}
		\end{align}
		Thus, from 	$\left(W^2\, \delta_{ij}-\nabla_i u\nabla_j u\right)\nabla_{ij}\chi(x_0)\leq 0$, we get
		\begin{align}
			\label{CC8}
			\begin{split}
				\left((1+v)\delta_{ij} - \nabla_i u\nabla_j u\right)&\nabla_k u\nabla_{ijk}u \leq K^2 v^2  -Kv(1+v)^{3/2}e^u\Upsilon \\ &- Knv(1+v)
			\end{split}
		\end{align}
		Using the Ricci identity
		\[
		\nabla_{ijk} u = \nabla_{ikj} u + \nabla_l u R_{iljk}
		\]
		we get
		\begin{align}
			\label{CC82}
			\begin{split}
				\left((1+v)\delta_{ij} - \nabla_i u\nabla_j u\right)\nabla_k u\nabla_{ijk}u &=  (1+v)\nabla_\ell u\nabla_k u R_{i\ell jk} \\
				+	\left((1+v)\delta_{ij} - \nabla_i u\nabla_j u\right)&\nabla_k u\nabla_{kij}u 
			\end{split}
		\end{align}
		since $\nabla_\ell u\nabla_k u\nabla_i u\nabla_j u \delta_{ij}R_{i\ell jk}=0$. 	Replacing  into \eqref{CC8}, we obtain
		\begin{align}
			\label{CC9}
			\begin{split}
				\left((1+v)\delta_{ij} - \nabla_i u\nabla_j u\right)&\nabla_k u\nabla_{ijk}u \leq K^2 v^2  -Kv(1+v)^{3/2}e^u\Upsilon\\ & - Knv(1+v)+v(1+v)R
			\end{split}
		\end{align}
		where $R=R_{11}$.  Finally, substituting \eqref{CC9} into \eqref{CC6}, we get
		\begin{align}
			\begin{split}
				\label{CC11}
				K^2&(v^3-v^2)+Knv(1+v)^2+v(1+v)^2 R\\ &+K(1-2v-3v^2)ve^u\Upsilon +(1+v)^{5/2}v^{1/2}\nabla_1(e^u\Upsilon)\leq 0.
			\end{split}
		\end{align}
		As
		\[
		\nabla_1(e^u\Upsilon)=\frac{\partial}{\partial\rho}\left(\rho\Upsilon\right)\Big |_{\rho=e^u}e^u v^{1/2}+e^u\nabla_1 \Upsilon,
		\]
		we can apply  \eqref{decres-rho} to obtain from \eqref{CC11} the inequality
		\begin{align}
			\begin{split}
				\label{CC12}
				&	\left(K^2-3Ke^u\Upsilon+ R-e^u|\nabla_1\Upsilon|\right)v^3 \\ + & \left(2Kn+2R-2Ke^u\Upsilon-2e^u|\nabla_1\Upsilon|-K^2\right)v^2\\ 
				&	+\left(Ke^u\Upsilon-2e^u|\nabla_1\Upsilon|+Kn+R\right)v-e^u|\nabla_1\Upsilon|\leq 0.
			\end{split}
		\end{align}
		Notice that all constants in the expression above depend only on $|u|$ and $\Upsilon$. We can choose $K$ large enough so that the coefficient of $v^3$ in the left hand side is positive. With this choice we conclude that $v\leq C$ at $x_0$ for some uniform constant depending on the height and boundary gradient estimates for $u$ and on $\Upsilon$.
	\end{proof}
	
	\section{Existence}
	\label{Existence}
	
	In this section, we establish the proof of Theorem \ref{main} by applying the continuity method combined with a degree theory argument (see \cite{YY}) and utilizing the a priori estimates we have established in the previous sections. Our proof follow the lines of \cite{A-L-L} and \cite{FF-A}.  As noted in section \ref{Preliminaries}, to obtain a solution of \eqref{pd}, we work with the formulation given in \eqref{pd2}. Specifically, we will consider two auxiliary forms for \eqref{pd} obtained by choosing $\Upsilon$ as follows: For a fixed $t\in [0,1]$ we define the functions $\Psi_t$ and $\Theta_t$, defined on 
	\[
	\Delta=\left\{X\in\mathbb{R}^{n+1}: \frac{X}{|X|}\in\overline{\Omega}\right\}
	\]
	and given by
	\begin{equation}\label{Psi_t}
		\Psi_t(\rho x)=-\rho^{-C}\left(t\epsilon+(1-t)\overline{\rho}^C nH_{\overline{\Sigma}}\right)
	\end{equation}
	and
	\begin{equation}\label{theta_t}
		\Theta_t(\rho x)=-tnH-(1-t)\epsilon\rho^{-C},
	\end{equation}
	where $\overline{\rho}=e^{\overline{u}}, x\in \Omega$, and $ \epsilon>0$ satisfies
	\begin{equation}\label{ineq H}
		nH_{\overline{\Sigma}}>nH+\frac{\epsilon}{\ell^C},
	\end{equation}
	with $\ell=\inf e^{\overline{u}}$,	and $C>0$ is a constant to be chosen later. 
	These functions satisfy
	\begin{eqnarray*}
		\dfrac{\partial}{\partial \rho}(\rho \Psi_t(\rho x)) 
		=(C-1)\rho^{-C}\left(t\epsilon+(1-t)\overline{\rho}^CnH_{\overline{\Sigma}}\right)>0
	\end{eqnarray*}
	and
	\begin{eqnarray*}
		\dfrac{\partial}{\partial \rho}(\rho \Theta_t(\rho x))  
		&=&-tnH+(C-1)(1-t)\frac{\epsilon}{\rho^C}\\
		&>&-tnH + (C-1)(1-t)\epsilon\overline{\rho}^C >0
	\end{eqnarray*}
	for $\rho\leq \overline{\rho}$ and $C>0$ sufficiently large. We consider the two family of Dirichlet problem of the form (\ref{pd2}), by setting $\Upsilon=\Psi_t$ and $\Upsilon=\Theta_t$:
	\begin{equation}\label{pd3}
		\left\{\begin{array}{cl}
			Q[u]=-e^{-Cu}(t\epsilon+(1-t)ne^{C\overline{u}}H_{\overline{\Sigma}}) &\textrm{in } \Omega \\
			u= \varphi  &\textrm{on } \partial \Omega
		\end{array}\right.
	\end{equation}
	and
	\begin{equation}\label{pd4}
		\left\{\begin{array}{cl}
			Q[u]=-tnH-(1-t)\epsilon e^{-Cu}& \textrm{in } \Omega \\
			u=\varphi & \textrm{on } \partial \Omega
		\end{array}\right.
	\end{equation}
	for each $t\in[0,1]$. The zero order term of the linearized form of \eqref{pd3} satisfies
	\begin{equation}\label{30}
		\frac{\partial}{\partial u}\left(Q[u]-\Psi_t(e^ux)\right)=-\Psi_t(e^ux)+C\Psi_t(e^ux)<0,
	\end{equation}
	since, by \eqref{12},
	\begin{equation}
		\frac{\partial Q}{\partial u}=-Q[u] =-\Psi_t
	\end{equation}
	and
	\begin{equation}
		\frac{\partial}{\partial u}(\Psi_t(e^ux))=-\frac{\partial}{\partial u}\left(e^{-Cu}(t\epsilon+(1-t)nH_{\overline{\Sigma}})e^{C\overline{u}}\right)=-C\Psi_t(e^ux).
	\end{equation}
	
	Thus, \eqref{pd3} satisfies the Comparison Principle, and its linearized operator is invertible. Now we require the following lemmas.
	\begin{lemma}\label{lemma6}
		For each $t\in[0,1]$, \eqref{pd3}  has at most one solution $u$, and $u\leq \overline{u}$.
	\end{lemma}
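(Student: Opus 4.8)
The plan is to prove the two assertions of Lemma \ref{lemma6} separately, both via the comparison principle that is available for \eqref{pd3} thanks to the sign computation \eqref{30}. For the uniqueness claim, suppose $u_1$ and $u_2$ are two solutions of \eqref{pd3} for the same parameter $t$. Since the zero-order term of the linearization of $Q[u]-\Psi_t(e^ux)$ is strictly negative by \eqref{30}, the operator $u\mapsto Q[u]-\Psi_t(e^ux)$ satisfies the comparison principle: if $Q[u_1]-\Psi_t(e^{u_1}x)\ge Q[u_2]-\Psi_t(e^{u_2}x)$ in $\Omega$ and $u_1\le u_2$ on $\partial\Omega$, then $u_1\le u_2$ in $\overline{\Omega}$. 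Here both sides of the differential equation vanish and the boundary values agree, so applying this in both directions yields $u_1\equiv u_2$. Concretely, one writes the difference $w=u_1-u_2$, notes that $w$ satisfies a linear homogeneous elliptic equation $a^{ij}\nabla_{ij}w+b^i\nabla_i w+cw=0$ with $c<0$ (obtained by the standard interpolation $F[u_1]-F[u_2]=\int_0^1\frac{d}{ds}F[su_1+(1-s)u_2]\,ds$ for $F[u]:=Q[u]-\Psi_t(e^ux)$), and $w=0$ on $\partial\Omega$, so the maximum principle for operators with negative zero-order coefficient forces $w\equiv 0$.

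For the bound $u\le\overline{u}$, the idea is to use $\overline{u}$ — the height function of the subsolution $\overline{\Sigma}$ — as a supersolution of \eqref{pd3}. By hypothesis $\overline{\Sigma}$ has mean curvature $H_{\overline{\Sigma}}>0$, so $Q[\overline{u}]=-nH_{\overline{\Sigma}}$. I would compare this with the right-hand side of \eqref{pd3} evaluated at $\overline{u}$, namely $\Psi_t(e^{\overline{u}}x)=-e^{-C\overline{u}}(t\epsilon+(1-t)ne^{C\overline{u}}H_{\overline{\Sigma}})=-t\epsilon e^{-C\overline{u}}-(1-t)nH_{\overline{\Sigma}}$. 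Then
\[
Q[\overline{u}]-\Psi_t(e^{\overline{u}}x)=-nH_{\overline{\Sigma}}+t\epsilon e^{-C\overline{u}}+(1-t)nH_{\overline{\Sigma}}=t\bigl(\epsilon e^{-C\overline{u}}-nH_{\overline{\Sigma}}\bigr).
\]
Using $e^{-C\overline{u}}\le \ell^{-C}$ (since $\ell=\inf e^{\overline{u}}$) together with \eqref{ineq H}, which gives $nH_{\overline{\Sigma}}>nH+\epsilon\ell^{-C}\ge \epsilon\ell^{-C}\ge \epsilon e^{-C\overline{u}}$, we conclude $\epsilon e^{-C\overline{u}}-nH_{\overline{\Sigma}}<0$, hence $Q[\overline{u}]-\Psi_t(e^{\overline{u}}x)\le 0$ for every $t\in[0,1]$ (with equality only at $t=0$). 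Meanwhile any solution $u$ of \eqref{pd3} satisfies $Q[u]-\Psi_t(e^ux)=0$ and $u=\varphi=\overline{u}$ on $\partial\Omega$. Thus $\overline{u}$ is a supersolution and $u$ a solution with matching boundary data, so the comparison principle for the operator $F[u]=Q[u]-\Psi_t(e^ux)$ (valid by \eqref{30}) yields $u\le\overline{u}$ in $\overline{\Omega}$.

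The only delicate point is to make sure the comparison principle is being invoked in a form that genuinely applies: the coefficients $a^{ij}$, $b^i$, $c$ of the linearized-difference operator must be well defined along the segment joining the two competitors, which requires only that $Q$ and $\Psi_t$ be $C^1$ in $(u,\nabla u,\nabla^2 u)$ — true here since $Q$ is smooth and elliptic on all of $\Omega$ and $\Psi_t$ is smooth — and that the zero-order coefficient stay non-positive, which is exactly the content of \eqref{30} (the sign there does not depend on which function in the segment is plugged in, since $\Psi_t<0$ and $C>1$). A secondary bookkeeping issue is that \eqref{pd3} has boundary data $\varphi$, and one needs $\overline{u}|_{\partial\Omega}=\varphi$; this holds because $\overline{\Sigma}$ has boundary data $\varphi$ by hypothesis. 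No interior estimates are needed for this lemma — it is purely a maximum-principle argument — so I do not expect any real obstacle beyond writing the linearization cleanly.
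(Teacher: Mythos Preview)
Your proof is correct and follows essentially the same approach as the paper: both rely on the sign condition \eqref{30} to invoke the comparison principle, and both hinge on the computation $Q[\overline{u}]-\Psi_t(e^{\overline{u}}x)=t\bigl(\epsilon e^{-C\overline{u}}-nH_{\overline{\Sigma}}\bigr)\le 0$. The only cosmetic difference is that the paper packages the bound $u\le\overline{u}$ as a contradiction argument via the deformation $u_s=su+(1-s)\overline{u}$, whereas you directly verify that $\overline{u}$ is a supersolution and apply the comparison principle.
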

	\begin{proof} Uniqueness, if a solution exists, follows from the Comparison Principle. We prove that any solution $u$ satisfies $u\leq \overline{u}$. Suppose, by contradiction, that $u-\overline{u}$ achieves a positive maximum at some interior point $x_0\in \Omega$.
		Thus,
		\begin{equation}\label{31}
			u(x_0)>\overline{u}(x_0), \qquad \nabla u(x_0)=\nabla\overline{u}(x_0),\qquad \nabla^2 u(x_0)\leq \nabla^2\overline{u}(x_0).
		\end{equation}
		Consider the deformation
		\begin{equation*}
			u_s:=su+(1-s)\overline{u},\qquad s\in [0,1]
		\end{equation*}
		and define $\beta(s)=Q[u_s]-\Psi_t(e^{u_s} x)$,
		where $t\in[0,1]$ is such that $u$ is a solution of  \eqref{pd3}.
		Since 
		\begin{eqnarray*}
			\beta(0) =Q[\overline{u}]-\Psi_t(e^{\overline{u}}x)   
			=t\left(\epsilon e^{-C\overline{u}}-nH_{\overline{\Sigma}}\right)\nonumber
		\end{eqnarray*}
		it follows from the choice of $\epsilon >0$ that 	$\beta(0)<0$.
		On the other hand, as $u$ is a solution of \eqref{pd3}, we have
		\[
		\beta(1)=Q[u]-\Psi_t(e^ux)=0.
		\]
		Thus, there exists $s_0\in [0,1]$ such that $\beta(s_0)=0$ and $\beta'(s_0)\geq 0$. Therefore, 
		\[
		Q[u_{s_0}(x_0)]=\Psi_t(u_{s_0}(x_0))
		\]
		and
		\begin{align*}
			\begin{split}
				\dfrac{\partial Q}{\partial \nabla_{ij} u}\bigg|_{u_{s_0}(x_0)}\nabla_{ij}(u-\overline{u})(x_0)+\dfrac{\partial Q}{\partial \nabla_{i} u}\bigg|_{u_{s_0}(x_0)}\nabla_{i}(u-\overline{u})(x_0)\\
				+\dfrac{\partial Q}{\partial u}\bigg|_{u_{s_0}(x_0)}(u-\overline{u})(x_0)+\dfrac{\partial \Psi_t}{\partial u}\bigg|_{u_{s_0}(x_0)}(u-\overline{u})(x_0)\geq 0.
			\end{split}
		\end{align*}
		Applying \eqref{31} to the above inequality, we obtain
		\begin{equation}\label{32}
			\dfrac{\partial Q}{\partial \nabla_{ij} u}\bigg|_{u_{s_0}(x_0)}\nabla_{ij}(u-\overline{u})(x_0)+\dfrac{\partial }{\partial u}(Q-\Psi_t)\bigg|_{u_{s_0}(x_0)}(u-\overline{u})(x_0)\geq 0.
		\end{equation}
	However,  from \eqref{30} and \eqref{31}, we have
		\[
		\dfrac{\partial Q}{\partial \nabla_{ij} u}\bigg|_{u_{s_0}(x_0)}\nabla_{ij}(u-\overline{u})(x_0)\leq 0\qquad \mbox{and}\qquad \dfrac{\partial }{\partial u}(Q+\Psi_t)\bigg|_{u_{s_0}(x_0)}(u-\overline{u})(x_0)< 0
		\]
		which contradicts \eqref{32}. Hence, $u\leq \overline{u}$.
	\end{proof}
	\begin{lemma}\label{lemma7}
		Let $u\leq \overline{u}$ be a solution of \eqref{pd3}. Then $u< \overline{u}$ in $\Omega$ and $\nu(u-\overline{u})<0$ on $\partial \Omega$, where $\nu$ is the inward unit normal to $\partial \Omega$.
	\end{lemma}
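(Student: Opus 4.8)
The plan is to realize the difference $w:=u-\overline u$ as a subsolution of a linear elliptic operator with nonpositive zero-order coefficient, and then to combine a maximum-principle argument in $\Omega$ with Hopf's boundary point lemma on $\partial\Omega$. From Lemma \ref{lemma6} we already have $w\le 0$ in $\overline\Omega$, and since both $u$ and $\overline u$ take the boundary value $\varphi$, we have $w\equiv 0$ on $\partial\Omega$; thus it remains to rule out interior zeros of $w$ and to produce a strict sign for its inward normal derivative along $\partial\Omega$.

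To set up the linearization, I would put $F[v]:=Q[v]-\Psi_t(e^{v}x)$, so that $F[u]=0$ since $u$ solves \eqref{pd3}. Using $Q[\overline u]=-nH_{\overline\Sigma}$ and $\Psi_t(e^{\overline u}x)=-t\epsilon e^{-C\overline u}-(1-t)nH_{\overline\Sigma}$, one gets
\[
F[\overline u]=t\bigl(\epsilon e^{-C\overline u}-nH_{\overline\Sigma}\bigr),
\]
and the choice of $\epsilon$ in \eqref{ineq H}, together with $e^{-C\overline u}\le\ell^{-C}$, shows that $F[\overline u]\le 0$, strictly when $t>0$. Writing $F[u]-F[\overline u]$ as an integral along the segment $u_s:=su+(1-s)\overline u$, $s\in[0,1]$, then gives
\[
-F[\overline u]=F[u]-F[\overline u]=a^{ij}\nabla_{ij}w+b^{i}\nabla_{i}w+c\,w=:\mathcal L w ,
\]
where $(a^{ij})$ is positive definite by the ellipticity of $Q$, the $b^{i}$ are bounded, and
\[
c=\int_0^1\frac{\partial}{\partial u}\bigl(Q-\Psi_t(e^{u}x)\bigr)\Big|_{u=u_s}\,ds=(C-1)\int_0^1\Psi_t(e^{u_s}x)\,ds<0
\]
by \eqref{30}. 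Hence $\mathcal L w=-F[\overline u]\ge 0$ in $\Omega$, with $\mathcal L w>0$ whenever $t>0$.

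The conclusion would then follow quickly. For $t=0$ the supersolution $\overline u$ itself solves \eqref{pd3}, so $u=\overline u$ by Lemma \ref{lemma6}; so I may assume $t>0$. Then $\mathcal L w>0$ in $\Omega$ prevents $w$ from attaining the value $0$ at an interior point $x_0$, since there $\nabla w(x_0)=0$, $\nabla^2 w(x_0)\le 0$ and $c\,w(x_0)=0$ would force $\mathcal L w(x_0)\le 0$; hence $w<0$ in $\Omega$. Finally, for $p\in\partial\Omega$: since $\partial\Omega$ is smooth, $\Omega$ satisfies the interior sphere condition at $p$, and as $w<0=w(p)=\max_{\overline\Omega}w$ with $\mathcal L w\ge 0$ and $c\le 0$, Hopf's boundary point lemma yields a strictly positive outer normal derivative of $w$ at $p$; equivalently $\nu(u-\overline u)<0$ on $\partial\Omega$ for the inward unit normal $\nu$.

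The main point — not really an obstacle — is the structural fact \eqref{30}, namely $\partial(Q-\Psi_t(e^{u}x))/\partial u<0$: this is exactly what makes the zero-order coefficient $c$ of the linearized operator negative, so that the interior argument and Hopf's lemma apply without extra work. Controlling this term is precisely what fails for the original equation $Q[u]=-nH$ — whose linearization carries a zero-order term of the wrong sign, as remarked after \eqref{pd} — and it is the reason the auxiliary problem \eqref{pd3} is used here. The only items that need genuine care are the bookkeeping that converts \eqref{ineq H} into $F[\overline u]\le 0$ and the observation that the strict conclusions do require $t>0$, the case $t=0$ being the degenerate one in which $u=\overline u$.
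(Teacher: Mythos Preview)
Your overall strategy is sound, but the computation of the zero--order coefficient $c$ has a gap. The identity \eqref{30} in the paper reads $\partial_u\bigl(Q-\Psi_t(e^{u}x)\bigr)=(C-1)\Psi_t$ only because the substitution $\partial_u Q=-Q[u]=-\Psi_t$ is made \emph{at a solution} of $Q=\Psi_t$. Along the interpolants $u_s$ one has instead
\[
\frac{\partial}{\partial u}\bigl(Q-\Psi_t(e^{u}x)\bigr)\Big|_{u_s}=-Q[u_s]+C\,\Psi_t(e^{u_s}x),
\]
and $u_s$ is not a solution, so your formula $c=(C-1)\int_0^1\Psi_t(e^{u_s}x)\,ds$ and the conclusion $c<0$ are not justified. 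Your interior step survives this (at an interior maximum with value $0$ one has $cw=0$ regardless of the sign of $c$), but your Hopf step explicitly relies on $c\le 0$. Two clean repairs: either linearize $e^{u}\bigl(Q[u]-\Psi_t(e^{u}x)\bigr)$ instead --- since $e^{u}Q[u]$ does not depend on the value of $u$, the zero--order coefficient becomes $-\partial_u\bigl(e^{u}\Psi_t(e^{u}x)\bigr)=(C-1)e^{u}\Psi_t<0$ for $C>1$ --- or invoke the version of Hopf's lemma that dispenses with the sign condition on $c$ when the boundary maximum equals $0$.

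The paper avoids this issue by arguing pointwise rather than via a global linearization: at an interior contact point it uses $\nabla u=\nabla\overline u$ and $\nabla^2u\le\nabla^2\overline u$ to get $Q[u]\le Q[\overline u]$ and then computes the right--hand sides directly to obtain the opposite strict inequality; on $\partial\Omega$ it replaces Hopf's lemma by a second--order Taylor expansion along a geodesic transverse to $\partial\Omega$. Note also that the paper's right--hand side computation actually uses $\Theta_t$ from \eqref{pd4} rather than $\Psi_t$; in that formulation the strict inequality holds for every $t\in[0,1]$, which is what the degree argument later requires. Your observation that the case $t=0$ of \eqref{pd3} is degenerate (indeed $u=\overline u$ there) is correct and reflects this discrepancy.
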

	\begin{proof} Suppose, by contradiction, that $u=\overline{u}$ at some point $x_0\in \Omega$. Then $x_0$ is a maximum point of $u-\overline{u}$. Thus, 
		\begin{equation}\label{33}
			u(x_0)=\overline{u}(x_0), \quad \nabla u(x_0)=\nabla\overline{u}(x_0),\quad \nabla^2 u(x_0)=\nabla^2\overline{u}(x_0).
		\end{equation}
		From \eqref{12}, it follows that
		\begin{equation}\label{34}
			Q[u](x_0)\leq Q[\overline{u}](x_0).
		\end{equation}
	However, by the choice of $\epsilon$, we have
		\begin{eqnarray*}
			Q[u](x_0)&=&-tnH-(1-t)\epsilon e^{-C\overline u(x_0)}  \\
			& >&-tnH- \epsilon \ell^{-C}>-nH_{\overline{\Sigma}}=Q[\overline u](x_0) 
		\end{eqnarray*}
		which contradicts \eqref{34}. Therefore, $u<\overline{u}$ in $\Omega$.
		
	Since $u\leq \overline{u}$, it follows that  $\nu (u-\overline{u})\leq 0$ on $\partial\Omega$. Suppose  $\nu (u-\overline{u})(x_0)=0$  for some point $x_0\in \partial \Omega$. Since $u=\overline{u}$ on $\partial \Omega$, we have $\nabla u(x_0)=\nabla \overline{u}(x_0)$. From the proof of the first part, we conclude that $\nabla^2 u(x_0)>\nabla^2 \overline{u}(x_0)$, as otherwise, we would obtain a contradiction as before. Thus, there exists $\xi\in T_{x_0}\mathbb S^n$ such that
		\[
		\nabla_{\xi \xi}u(x_0)>\nabla_{\xi \xi}\overline{u}(x_0).
		\]
		Moreover, $\xi$ cannot be tangent to $\partial \Omega$, since $u=\overline{u}$ on $\partial \Omega$ and $\nabla u(x_0)=\nabla \overline{u}(x_0)$.  Assume that $\xi$ (or $-\xi$) points inward to $\Omega$. Let $\gamma(t)=\exp_{x_0}(t\xi )$ be the geodesic starting at $x_0$ with velocity $\xi$, that is,  For sufficiently small $t$, $\gamma(t)\in \Omega$ and we have 
		\begin{align}
			\begin{split}
				\label{conta-exist}
				u(\gamma(0))&=\overline{u}(\gamma(0))\\
				(u\circ \gamma)'(0)&=(\overline{u}\circ \gamma)'(0)\\
				(u\circ \gamma)''(0)&>(\overline{u}\circ \gamma)''(0).  
			\end{split}
		\end{align}
		Thus, $u(\gamma(t))>\overline{u}(\gamma(t))$ for sufficiently small $t$, which  contradicts $u< \overline{u}$ in $\Omega$. Hence, $\nu(u-\overline{u})<0$ on $\partial\Omega$.
	\end{proof}
	
	We are now in a position to establish the existence of solution for \eqref{pd3} and \eqref{pd4}.
	
	\begin{theorem}\label{thm9}
		For each $t\in[0,1]$, \eqref{pd3} has a unique solution.
	\end{theorem}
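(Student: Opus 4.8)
The plan is to use the continuity method on the parameter $t\in[0,1]$, combined with the degree-theoretic openness–closedness argument of \cite{YY}. Set
\[
I=\{t\in[0,1]:\eqref{pd3}\text{ has a solution }u\in C^{2,\alpha}(\overline\Omega)\}.
\]
We have already shown in Lemma \ref{lemma6} that any such solution is unique and satisfies $u\le\overline u$. At $t=0$ the problem \eqref{pd3} reads $Q[u]=-ne^{C(\overline u-u)}H_{\overline\Sigma}$, and one checks directly that $u=\overline u$ is a solution (indeed $Q[\overline u]=-nH_{\overline\Sigma}$ by construction of $\overline\Sigma$); hence $0\in I$ and $I\neq\emptyset$. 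The goal is to prove $I$ is both open and closed in $[0,1]$, so that $I=[0,1]$; in particular $1\in I$ and \eqref{pd3} at $t=1$ has its (unique) solution.

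First I would treat \emph{closedness}. Suppose $t_k\in I$, $t_k\to t_\infty$, with solutions $u_k$. By Lemma \ref{lemma6}, $u_k\le\overline u$, so the height estimate \eqref{height} gives $|u_k|\le L$ uniformly. Proposition \ref{P-BD} then gives a uniform bound for $|\nabla u_k|$ on $\partial\Omega$, and since $\Psi_{t_k}$ satisfies \eqref{decres-rho} (verified in the displayed computation before \eqref{pd3}) with constants uniform in $t$, the interior gradient estimate \eqref{IGE} gives a uniform bound $|\nabla u_k|\le C$ on all of $\Omega$. With uniform $C^1$ bounds and uniform ellipticity of $Q$ on this $C^1$-ball, the Evans–Krylov / Schauder machinery (as in \cite{TRU}) yields uniform $C^{2,\alpha}(\overline\Omega)$ bounds; passing to a subsequence, $u_k\to u_\infty$ in $C^2(\overline\Omega)$ and $u_\infty$ solves \eqref{pd3} at $t_\infty$, so $t_\infty\in I$ and $I$ is closed.

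For \emph{openness}, fix $t_0\in I$ with solution $u_0$. The linearization of the operator $u\mapsto Q[u]-\Psi_t(e^ux)$ at $u_0$ is a linear elliptic operator $\mathcal L$ whose zero-order coefficient is $\frac{\partial}{\partial u}(Q[u]-\Psi_t(e^ux))|_{u_0}<0$ by \eqref{30}. Hence $\mathcal L:C^{2,\alpha}_0(\overline\Omega)\to C^{\alpha}(\overline\Omega)$ is invertible (maximum principle plus Fredholm alternative), and the Implicit Function Theorem in these Hölder spaces produces, for $t$ near $t_0$, a solution $u_t$ depending smoothly on $t$; thus $I$ is open. (If one instead organizes the argument purely through Leray–Schauder degree as in \cite{A-L-L}, \cite{FF-A}, the a priori estimates above confine all solutions, for all $t$, to a fixed open ball in $C^{2,\alpha}$ on which the relevant degree is defined; invariance of the degree together with its nonzero value at $t=0$ — where the solution $u=\overline u$ is unique and nondegenerate by \eqref{30} — yields solvability for every $t$, in particular $t=1$.)

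The main obstacle is the closedness step, specifically ensuring the a priori estimates are \emph{uniform in $t$}: one must check that the constants $C$ appearing in \eqref{height}, in Proposition \ref{P-BD}, and in \eqref{IGE} can be taken independent of $t\in[0,1]$. This reduces to verifying that the family $\{\Psi_t\}_{t\in[0,1]}$ has the structural properties used in those propositions with uniform constants: non-positivity (clear from \eqref{Psi_t}), the monotonicity \eqref{decres-rho} (shown above, with constant $(C-1)$ independent of $t$), and uniform bounds on $\Psi_t$ and $\nabla\Psi_t$ over the compact region $\ell\le\rho\le\overline\rho$, $x\in\overline\Omega$ — all of which follow since $\Psi_t$ is an affine function of $t$ with smooth, $t$-independent endpoints. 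Once this uniformity is in hand, the rest is the standard continuity/degree scheme.
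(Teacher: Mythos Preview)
Your proposal is correct and follows essentially the same approach as the paper: the standard continuity method, with $t=0$ solved by $\overline u$, openness from the invertibility of the linearization guaranteed by \eqref{30}, and closedness from the a priori estimates of Sections \ref{Estimates}--\ref{interior} applied uniformly in $t$ via Lemma \ref{lemma6} and the monotonicity \eqref{decres-rho} of $\Psi_t$. The paper's own proof is a terse version of exactly this argument; your additional care about uniformity of constants in $t$ and the compactness/Schauder step is appropriate elaboration, and the parenthetical degree-theoretic alternative is not needed here (the paper reserves the degree argument for the family \eqref{pd4}).
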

	\begin{proof}
	Uniqueness follows from the ellipticity of $Q$ and \eqref{30}. To prove the existence, we use the standard continuity method. Define
		\[
		\mathcal{S}=\{t\in [0,1], (\ref{pd3})\quad\mbox{has solution} \}.
		\]
		We have $ 0\in\mathcal{S}\neq\emptyset$ since $\overline{u}$ is a solution of  \eqref{pd2} for $t=0$. Moreover, since \eqref{30} implies that the linearized operator of \eqref{pd3} is invertible, $\mathcal{S}$ is  open.  Finally, $\mathcal{S}$ is closed,  as Lemma \ref{lemma6} ensures that each solution $u$ of \eqref{pd3} satisfies $u\leq \overline{u}$, and since $\frac{\partial }{\partial \rho}\left(\Psi(\rho x)\right)\geq 0$, the a priori estimates established in  previous sections apply. Thus, $\mathcal{S}=[0,1]$.
	\end{proof}
	
	\begin{theorem}
		For each $t\in [0,1]$, \eqref{pd4} has a solution.
	\end{theorem}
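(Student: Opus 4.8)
The plan is to prove that \eqref{pd4} is solvable for every $t\in[0,1]$ by a Leray--Schauder degree argument; since at $t=1$ problem \eqref{pd4} is exactly \eqref{pd} with the given constant $H$, this also completes the proof of Theorem~\ref{main}. The continuity method that handled \eqref{pd3} is not available here, because along a solution of \eqref{pd4} the zero order term of the linearization equals
\[
\frac{\partial}{\partial u}\big(Q[u]-\Theta_t(e^{u}x)\big)=tnH+(1-t)(1-C)\epsilon e^{-Cu},
\]
which for $t\in(0,1)$ has no definite sign; thus \eqref{pd4} satisfies no comparison principle and its linearized operator need not be invertible. Because comparison fails I do not expect every solution of \eqref{pd4} to satisfy $u\leq\overline u$ (Example~\ref{example} already produces ``large'' solutions), so I would apply the degree not on a ball but on the bounded open set
\[
\mathcal O=\big\{u\in C^{1,\alpha}(\overline\Omega):\ u=\varphi\text{ on }\partial\Omega,\ u<\overline u\text{ in }\Omega,\ \partial_\nu(u-\overline u)<0\text{ on }\partial\Omega,\ \|u\|_{C^{1,\alpha}}<M\big\},
\]
where $\nu$ is the inward unit normal and $M$ is fixed below; the Hopf-type condition makes $\mathcal O$ open. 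As usual, solving \eqref{pd4} amounts to finding a zero of a map $F_t=I-K_t$ with $K_t$ compact and jointly continuous in $(t,u)$ (cf.\ \cite{YY}).

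The first step would be the a priori estimates. If $u\in\overline{\mathcal O}$ solves \eqref{pd4} for some $t$ then $u\leq\overline u$; the functions $\Theta_t$ are non-positive and, for $C$ large, satisfy \eqref{decres-rho} for $\rho\leq\overline\rho$ (checked above), so the height estimate \eqref{height}, the boundary gradient estimate of Proposition~\ref{P-BD}, and the interior gradient estimate \eqref{IGE} all apply. Since $|u|\leq L$ makes $\Theta_t,\nabla\Theta_t$ uniformly bounded, these bounds are uniform in $t$; they give $\|u\|_{C^{1}(\overline\Omega)}\leq L'$, and since the equation is then uniformly elliptic with Hölder coefficients on the relevant range, Schauder theory upgrades this to $\|u\|_{C^{2,\alpha}(\overline\Omega)}\leq L''$, with $L',L''$ depending only on $n,\Omega,\overline u$. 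I then fix $M>L''$.

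The second step would be a strict maximum principle: any solution $u\leq\overline u$ of \eqref{pd4} satisfies $u<\overline u$ in $\Omega$ and $\partial_\nu(u-\overline u)<0$ on $\partial\Omega$. This is proved as Lemma~\ref{lemma7}: at an interior point where $u=\overline u$, or at a boundary point where $\partial_\nu(u-\overline u)=0$, the values and gradients of $u$ and $\overline u$ coincide, so \eqref{12} forces $Q[u]\leq Q[\overline u]=-nH_{\overline\Sigma}$ there, whereas the strict inequality \eqref{ineq H} defining $\epsilon$ yields
\[
Q[u]=-tnH-(1-t)\epsilon e^{-C\overline u}\ \geq\ -tnH-(1-t)\frac{\epsilon}{\ell^{C}}\ >\ -nH_{\overline\Sigma},
\]
a contradiction. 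Combining the two steps: a solution of \eqref{pd4} lying in $\partial\mathcal O$ would satisfy $u\leq\overline u$ (it is a $C^{1,\alpha}$-limit of functions $<\overline u$ equal to $\varphi$ on $\partial\Omega$), hence $\|u\|_{C^{2,\alpha}}\leq L''<M$ and, by the previous sentence, $u<\overline u$ in $\Omega$ with $\partial_\nu(u-\overline u)<0$ on $\partial\Omega$ --- i.e.\ $u\in\mathcal O$, contradicting $u\in\partial\mathcal O$. So $0\notin F_t(\partial\mathcal O)$ for every $t$, and $\deg(F_t,\mathcal O,0)$ is defined and independent of $t$.

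Finally I would compute the degree at $t=0$, where \eqref{pd4} coincides with \eqref{pd3} at $t=1$. By Theorem~\ref{thm9} that problem has a unique solution $u_0$; by Lemma~\ref{lemma7} and the a priori bounds $u_0\in\mathcal O$, and by \eqref{30} the linearization of \eqref{pd3} at $u_0$ is invertible, so $u_0$ is the unique, nondegenerate zero of $F_0$ in $\mathcal O$ and $\deg(F_0,\mathcal O,0)=\pm1\neq0$. Homotopy invariance then gives $\deg(F_1,\mathcal O,0)\neq0$, so $F_1$ has a zero $u$; by elliptic regularity $u\in C^\infty(\Omega)\cap C^{2,\alpha}(\overline\Omega)$ solves \eqref{pd}, and $\Sigma=\Sigma(u)$ is the desired radial graph. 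I expect the main difficulty to be precisely the verification that no solution of \eqref{pd4} sits on $\partial\mathcal O$: this is where the positivity of $\mathcal H_{\partial\Omega}$ (via the a priori estimates of Sections~\ref{Estimates}--\ref{interior}) and the strict inequality \eqref{ineq H} (via the maximum principle above) are both indispensable, and it is what replaces the comparison principle in carrying the degree from $t=0$ to $t=1$.
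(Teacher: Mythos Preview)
Your proposal is correct and follows essentially the same strategy as the paper: define an open set cut out by the strict inequalities $u<\overline u$ in $\Omega$ and $\partial_\nu(u-\overline u)<0$ on $\partial\Omega$ together with a uniform norm bound, show via the a priori estimates of Sections~\ref{Estimates}--\ref{interior} and the strict comparison of Lemma~\ref{lemma7} (whose proof is indeed carried out for \eqref{pd4}) that no solution lies on the boundary of this set, and compute the degree at $t=0$ using the unique, nondegenerate solution furnished by Theorem~\ref{thm9}. The only cosmetic differences are that the paper works with $v=\overline u-u\in C^{4,\alpha}_0(\overline\Omega)$ and the map $M_t[v]=Q[\overline u-v]-\Theta_t$ in Li's degree framework \cite{YY}, whereas you work directly with $u$ in $C^{1,\alpha}$ via a Leray--Schauder compact perturbation; neither choice affects the argument. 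One caution: in your sketch of the boundary Hopf step you write that $\partial_\nu(u-\overline u)=0$ forces $Q[u]\leq Q[\overline u]$, but at a boundary point this does not follow directly (one does not have $\nabla^2 u\leq\nabla^2\overline u$); the paper's Lemma~\ref{lemma7} handles this via the second-order Taylor argument \eqref{conta-exist}, so when you cite that lemma be sure to include that extra step.
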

	\begin{proof}
		First, observe that if $u$ is a solution of \eqref{pd4} with $u\leq \overline{u}$, then
		\begin{equation}\label{35}
			\frac{\partial}{\partial \rho}(\rho \Theta_t(\rho x))>0,
		\end{equation}
		holds for $C$ sufficiently large. Thus,  the a priori estimates from previous sections apply. By the standard regularity  theory for quasilinear second-order elliptic equations, higher-order estimates also hold.
		% Thus, we can take the estimate of order $C^{4,\alpha}$ necessary to apply the degree theory presented in \cite{YY}, that is, 
		In particular, there exists a uniform constant $C_1>0$ such that
		\begin{equation}\label{36}
			|u|_{C^{4,\alpha}(\overline{\Omega})}<C_1.
		\end{equation}
		Let $\mathcal{O}$ be the subset of $C^{4,\alpha}(\overline{\Omega})$ defined by
		\begin{align}
			\begin{split}
				\mathcal{O}=\left\{ \right. v\in C_0^{4,\alpha} (\overline{\Omega})  &: v>0\ \mbox{in }\Omega,\ \nabla_\nu v>0\ \mbox{in }\partial \Omega \\  \mbox{and} &\ |v|_{4,\alpha}<C_1+|\overline{u}|_{4,\alpha}\left.\right\},
			\end{split}
		\end{align}
		where $C_1$ is the constant from \eqref{36}.  The set $\mathcal{O}$ is a bounded open subset of $C_0^{4,\alpha}$. Consider the map $M:\mathcal{O}\times [0,1]\longrightarrow C^{2,\alpha}(\Omega)$ defined by
		\[
		M_t[v]=M[v,t]=Q[\overline{u}-v]-\Theta_t(e^{\overline{u}-v}x).
		\]
		By Theorem \ref{thm9}, there exists a unique solution $u^0$ of \eqref{pd3} for $t=0$. 
		Thus, $v^0:=\overline{u}-u^0$ is a solution of $M_0[v]=0$. Furthermore, by Lemma \ref{lemma6}, $v^0\geq 0$, and by Lemma \ref{lemma7}, $v^0>0$ in $\Omega$ and $\nabla_\nu v^0> 0$ on $\partial \Omega$. Since $|v^0|_{4,\alpha}\leq C_1+|\overline{u}|_{4,\alpha}$, we have $v^0\in \mathcal{O}$.
		
		The equation  $M_t[v]=0$ has no solution on $\partial \mathcal{O}$. Indeed, if $v$ with $C^{4,\alpha}$ norm $C_1+|\overline{u}|_{4,\alpha}$ is a solution, it contradicts \eqref{36}. If $v=0$ at some interior point or  $\nabla_{\nu}v=0$ at some boundary point, it contradicts Lemma \ref{lemma7}. Moreover, $M_t$ is uniformly elliptic in $\mathcal{O}$, independent of $t$. Thus, the degree $\deg(M_t,\mathcal{O},0)$ is well-defined and independent of $t$.
		
		We compute $\deg(M_0,\mathcal{O},0)$. Since $v^0$ is the unique solution of $M_0[v]=0$ in $\mathcal{O}$, the Fréchet derivative of $M_0$ at $v^0$ is the linear operator from $C_0^{4,\alpha}(\Omega)$ to $C^{2,\alpha}(\Omega)$ given by
		\[
		M_{0,v^{0}}(h)=\frac{\partial Q}{\partial \nabla_{ij}u }\bigg|_{v^0}\nabla_{ij}h+\frac{\partial Q}{\partial \nabla_{i}u }\bigg|_{v^0}\nabla_{i}h+\frac{\partial }{\partial u }(Q-\Theta_0)\bigg|_{v^0} h.
		\]
		Since
		\begin{equation}
			\frac{\partial}{\partial u}(Q-\Theta)\bigg|_{v^0}h=-Q[u^0]-\epsilon e^{-Cu^0}=-2\epsilon e^{-Cu^0}<0,
		\end{equation}
		$M_{0,v^0}$ is invertible. Hence, by the theory developed in \cite{YY},
		\begin{equation}
			\deg (M_0,\mathcal{O},0)=\deg (M_0,v^0,B_1,0)=\pm 1\neq 0,
		\end{equation}
		where $B_1$ is the unit ball of $C^{4,\alpha}_0(\overline\Omega)$.  Thus,
		\begin{equation}
			\deg(M_t,\mathcal{O},0)\neq 0,
		\end{equation}
		for all $t\in[0,1]$. In particular,  \eqref{pd4} has a solution for each $t\in [0,1]$.
		Let $v^1$ be the solution of $M_1[v]=0$. Then $u^1:=\overline{u}-v^1$ is a solution of \eqref{pd}, proving Theorem \ref{main}.
	\end{proof}


\begin{thebibliography}{00}
		
		\bibitem{A-D} Alias, L. and Dajczer, M. \emph{Normal geodesic graphs of constant mean curvature}. J. Differential Geom. 75, 387-401 (2007).
		
		\bibitem{A-L-L} Andrade, F, Barbosa, J. and Lira, J. \emph{Prescribed mean curvature hypersurfaces in warped products}. Contemp. Math. 489 (2009).
		
		\bibitem{CSN-IV} Caffarelli, L. Nirenber, L. and Spruck, J. \emph{The Dirichlet problem for nonlinear second-order elliptic equations IV: Starshaped compact Weingarten hypersurfaces}. In: Ohya, Y., et al (eds). Current topics in P.D.E, pp. 1-26. Kinokunize Co, Tokyo (1996).
		
		\bibitem{Cald} Caldiroli, P. and Gullino, G. \emph{Radial graphs over domains of $S^n$ with prescribed mean curvature}. Journal of fixed point theory (2012).
		
		\bibitem{FF-A} Cruz, F. On the existence of hyperspheres with prescribed anisotropic mean curvature. Matemática contemporânea. 50, 54-75 (2022).
		
		\bibitem{FF} Cruz, F. \emph{Radial graphs of constant curvature and prescribed boundary}. Cal. Var. 56-83, (2017).
		
		\bibitem{DL} Dajczer, M., Lira,J. \emph{Conformal Killing graphs with prescribed mean curvature}. J. Geom. Anal.  (2011)
		
		\bibitem{Rip} Fusieger, P. and Ripoll, J. \emph{Radial graphs of constant mean curvature and doubly connected minimal surface with prescribed boundary}. Ann. Global Anal. Geom. 23,  373-400 (2003).
		
		\bibitem{TRU} Gilbarg, D. and Trudinger, N. S. \emph{Elliptic Partial Differential Equations of Second Order}, Springer-Verlag, Berlin, 1998.
		
		\bibitem{G-S} Guan, S. and Spruck, J.  Boundary value problem on $\mathbb{S}^n$ for surfaces of constant Gauss curvature. Ann. Math. 138, 601-624 (1993).
		
		\bibitem{JS}
		Jenkins, H. and Serrin, J. \emph{The Dirichlet problem for the minimal surface equation in higher dimensions}. J. Reine Angew. Math. 229, 170–187 (1968) .
		
		
		\bibitem{YY} Li, Y.Y. \emph{Degree Theory for Second Order Nonlinear Elliptic Operators and its Applications}. Communications in Partial Differential 
		Equations, 14:11, 1541-1578, (1989).
		
		\bibitem{YY-NIR}
		Li, Y.Y. and Nirenberg, L. \emph{Regularity of the distance function to the boundary}. Rendiconti. Accademia Nazionale delle Scienze detta dei XL, Memorie di Matematica e Applicazioni 123, 257–264 (2005).
		
		\bibitem{Lop} López, R. \emph{A note on radial graphs with constant mean curvature}. Manuscripta Math. 110, 45-54 (2003).
		
		\bibitem{Nelli}
		Nelli, B., Rosenberg, H., \emph{Some remarks on embedded hypersurfaces in hyperbolic space of constant mean curvature and spherical boundary}. Ann. of Glob. Anal. and Geom. 13, 23--30  (1995).
		
		
		\bibitem{Rad} Radó, T. \emph{Contributions to the theory of minimal surfaces}. Acta Litt. Sci. Univ. Szeged 6. (1932), 1-20. (1932-1934).
		
		\bibitem{SER} Serrin, J. \emph{The problem of Dirichlet for quasilinear elliptic differential equations with many independent variables}. Phil. Trans. R. Soc. Lond. A, v. 264, n.  1153,  413-496, (1969).
		
		\bibitem{TAU}Tausch, E. \emph{The n-dimensional least area problem for boundaries on a convex cone}. Arch. Ration. Mech. Anal. 75, 407–416 (1981).
		
		
	\end{thebibliography}
\end{document}